\newtheorem{theorem}{Theorem}[section]
\newtheorem{lemma}[theorem]{Lemma}
\newcommand{\bq}{\begin{equation}}
\newcommand{\eq}{\end{equation}}
\renewcommand{\ldots}{\dotsc}
\newtheorem{algorithm}{Modified Weak Galerkin Algorithm}
\newcommand{\bu}{{\bf u}}
\newcommand{\bw}{{\bf w}}
\newcommand{\bx}{{\bf x}}
\newcommand{\be}{{\bf e}}
\newcommand{\bv}{{\bf v}}
\newcommand{\bQ}{{\bf Q}}
\newcommand{\curl}{{\nabla\times}}
\newcommand{\cw}{{\nabla_w\times}}
\def\bpsi{{\boldsymbol{\psi}}}
\def\bvarphi{{\boldsymbol{\varphi}}}
\def\T{{\mathcal T}}
\def\E{{\mathcal E}}
\def\Q{{\mathbb Q}}
\def\pT{{\partial T}}
\def\l{{\langle}}
\def\r{{\rangle}}
\def\bbf{{\bf f}}
\def\bn{{\bf n}}
\def\bq{{\bf q}}
\def\3bar{{|\hspace{-.02in}|\hspace{-.02in}|}}
\def\p#1{\begin{pmatrix}#1\end{pmatrix}}  
\def\cal#1{\mathcal{#1}}
\def\ad#1{\begin{aligned}#1\end{aligned}}  \def\b#1{\mathbf{#1}} 
\def\a#1{\begin{align*}#1\end{align*}} \def\an#1{\begin{align}#1\end{align}}
\begin{document}

\title[MWG finite element]
{A Modified weak Galerkin finite element method for the Maxwell equations on polyhedral meshes}

\author {Chunmei Wang}
\address{Department of Mathematics, University of Florida, Gainesville, FL 32611, USA. }
\email{chunmei.wang@ufl.edu}
\thanks{The research of Chunmei Wang was partially supported by National Science Foundation Grants DMS-2136380 and DMS-2206332.}

\author {Xiu Ye}
\address{Department of Mathematics, University of Arkansas at
Little Rock, Little Rock, AR 72204, USA. }
\email{xxye@ualr.edu}

\author { Shangyou Zhang }
\address{Department of Mathematical
            Sciences, University
     of Delaware, Newark, DE 19716, USA. }
\email{szhang@udel.edu }

\date{}

\begin{abstract} 
We introduce a new numerical method for solving time-harmonic Maxwell's equations via
 the modified weak Galerkin technique.  The inter-element functions of the weak Galerkin
  finite elements are replaced by the average of the two discontinuous polynomial functions
  on the two sides of the polygon, in the modified weak Galerkin (MWG) finite element method.
With the dependent inter-element functions,
  the weak curl and the weak gradient are defined directly on totally discontinuous
  polynomials. Optimal-order convergence of the method is proved.   Numerical examples
  confirm the theory and show effectiveness of the modified weak Galerkin method over 
  the existing methods.

\end{abstract}

\keywords{ Modified weak Galerkin, finite element methods, weak curl,
Maxwell equations, polyhedral meshes}

\subjclass[2010]{ Primary, 65N15, 65N30, 76D07; Secondary, 35B45, 35J50}

\maketitle
\baselineskip=14pt\parskip=5pt

\section{Introduction}

In this paper, we introduce a new numerical
method  for solving the time-harmonic Maxwell equations in a heterogeneous
medium $\Omega\subset \mathbb{R}^3$. The mixed formulation of this model problem seeks unknown
functions $\bu$ and $p$ satisfying
\begin{eqnarray}\label{model}
\nabla\times(\mu\nabla\times \bu)-\epsilon\nabla p &=&{\bf f}_1\quad \mbox{in}\;\Omega,\label{moment1}\\
\nabla\cdot(\epsilon\bu)&=&g_1\quad\mbox{in}\;\Omega,\label{cont1}\\
\bu\times\bn &=& 0\quad\mbox{on}\;\partial\Omega,\label{bcc1}\\
p&=&0\quad\mbox{on}\;\partial\Omega,\label{bc1}
\end{eqnarray}
where the constant coefficients $\mu>0$ and $\epsilon>0$ are the magnetic
permeability and the electric permittivity of the medium,
respectively.
The formulations (\ref{moment1})-(\ref{bc1}) of the Maxwell equations
  have been used in \cite{Mu-W-Y-Z,ps, psm,vd}, for better numerical stability.

The space $H(\hbox{curl};D)$ is defined as the set of vector-valued
functions on $D$ which, together with their curl, are square
integrable; i.e.,
\[
H({\rm curl}; D)=\left\{ \bv: \ \bv\in [L^2(D)]^3, \nabla\times\bv \in
[L^2(D)]^3\right\}.
\]
Denote
the subspace of 
$H({\rm curl}; D)$ with vanishing trace in the tangential component by
\[
H_0({\rm curl}; D)=\left\{ \bv: \ \bv\in [L^2(D)]^3, \nabla\times\bv \in
[L^2(D)]^3: \bv\times \bn|_{\partial D}=0 \right\}.
\]

A weak formulation for (\ref{moment1})-(\ref{bc1}) seeks $(\bu, p)
\in H_0(\hbox{curl};\Omega) \times H_0^1(\Omega)$ such that
\begin{eqnarray}\label{weakform}
(\nu\nabla\times\bu,\ \nabla\times\bv)-(\bv,\nabla p)&=&({\bf f},\  \bv),
\quad \forall \bv \in H_0(\hbox{curl};\Omega)\label{w1}\\
(\bu,\nabla q)&=&-(g,q),\quad\forall q\in H_0^1(\Omega),\label{w2}
\end{eqnarray}
where $\nu=\mu/\epsilon$, ${\bf f}={\bf f}_1/\epsilon$ and
$g=g_1/\epsilon$.

The numerical methods for the Maxwell equations have been studied extensively,
such as the $H(\hbox{curl};\Omega)$-conforming edge elements
\cite{boss,jin,monk, Nedelec, Nedelec1} and the discontinuous Galerkin
finite elements \cite{bls,bcnl, hps,hps-1,hps-2, ps,psm}.  
A weak Galerkin finite element method is studied for the Maxwell equations
 in \cite{Mu-W-Y-Z}.

In the weak Galerkin finite element method, the discrete finite element 
  functions are generalized functions denoted by 
$u_h=\{u_0, u_b\}$ where $u_0$ is a polynomial
  on each polyhedron and $u_b$ is an unrelated polynomial on each face-polygon.
Using the usual integration by parts, we can define weak derivatives of the  generalized
  function $\{u_0, u_b\}$ as the polynomial $L^2$-projections of the standard generalized 
   derivatives. 
The method was first introduced in
\cite{wy, wy-mixed} for second order elliptic equations, and 
applied to other partial differential equations 
\cite{cwang-jwang-biharmonic,sf-wg,cdg1,cdg2,cdg3,Ye-Zhang2020b,
   Ye-Zhang2020c,sf-hdiv,Ye-Zhang2021,Ye-Zhang2021S,Ye-Zhang2022,Ye-Zhang2022b,Ye-Zhang2023}.

In this work, similar to the weak Galerkin finite element method in \cite{Mu-W-Y-Z},
  the modified weak Galerkin finite element method,  which was proposed in \cite{XWang} and has been applied to various problems 
   \cite{Cui-Y-Z,Gao-Z-Z,Mu-XW-Y,XiuliWang},  is applied to the Maxwell equations.
In the modified weak Galerkin finite element method, the weak function
  on the inter-element polygons is replaced by the average on the two sides, i.e.,
    $u_h=\{u_0, u_b\}=\{u_0,\{u_0\}\}$.
  In particular, it is proved in \cite{Gao-Y-Z} that any convex or nonconvex combination
  of two-side functions would work in the modified weak Galerkin finite element
  method, i.e., $u_b=\theta u_1+(1-\theta) u_2$ for any real number $\theta$, instead of
   $\theta=1/2$ only previously.

In this paper, we prove that the modified weak Galerkin finite elements are
   inf-sup stable in solving the saddle-point problem \eqref{w1}--\eqref{w2}.
Consequently the optimal order of convergence for $\b u$ and $p$ is established.
Numerical examples are provided, showing the correctness of the theory and
  the efficiency over the existing weak Galerkin method.

This section is ended with some basic notations.
Let $D$ be any open bounded domain with Lipschitz continuous
boundary in $\mathbb{R}^3$. We use the standard definition for the
Sobolev space $H^s(D)$ and their associated inner products
$(\cdot,\cdot)_{s,D}$, norms $\|\cdot\|_{s,D}$, and seminorms
$|\cdot|_{s,D}$ for any $s\ge 0$. For example, for any integer $s\ge
0$, the seminorm $|\cdot|_{s, D}$ is given by
$$
|v|_{s, D} = \left( \sum_{|\alpha|=s} \int_D |\partial^\alpha v|^2
dD \right)^{\frac12}
$$
with the usual notation
$$
\alpha=(\alpha_1, \ldots, \alpha_d), \quad |\alpha| =
\alpha_1+\ldots+\alpha_d,\quad
\partial^\alpha =\prod_{j=1}^3\partial_{x_j}^{\alpha_j}.
$$
The Sobolev norm $\|\cdot\|_{s,D}$ is given by
$$
\|v\|_{s, D} = \left(\sum_{j=0}^s |v|^2_{j,D} \right)^{\frac12}.
$$

The space $H^0(D)$ coincides with $L^2(D)$, for which the norm and
the inner product are denoted by $\|\cdot \|_{D}$ and
$(\cdot,\cdot)_{D}$, respectively. When $D=\Omega$, we shall drop
the subscript $D$ in the norm and the inner product notation.

\section{MWG finite element scheme}\label{Section:WGFEM}

Let ${\cal T}_h$ be a partition of the domain $\Omega$ with mesh
size $h$ that consists of polyhedra of arbitrary shape. Assume that
the partition ${\cal T}_h$ is shape regular. Denote by ${\cal E}_h$ the set of all
faces in ${\cal T}_h$ and let ${\cal E}_h^0={\cal E}_h\backslash\partial\Omega$ be the set of all interior faces.

For simplicity, we adopt the following notations; i.e.,
\begin{eqnarray*}
(v,w)_{\T_h} &=& \sum_{T\in\T_h}(v,w)_T=\sum_{T\in\T_h}\int_T vw d\bx,\\
 \l v,w\r_{\partial\T_h}&=&\sum_{T\in\T_h} \l v,w\r_\pT=\sum_{T\in\T_h} \int_\pT vw ds.
\end{eqnarray*}
Let $P_k(K)$ consist all the polynomials of degree less or equal to $k$ defined on $K$.

Let $k\ge 1$. We define two finite element spaces $V_h$ and $W_h$ as follows
\begin{eqnarray*}
V_h &=&\left\{ \bv\in [L^2(\Omega)]^3:\ \bv|_{T}\in [P_{k}(T)]^3,\;\; T\in\T_h\right\},\\
W_h &=&\left\{ q\in L^2(\Omega):\ q|_{T}\in P_{k-1}(T),\;\; T\in\T_h\right\}.
\end{eqnarray*}

We further introduce two subspaces of $V_h$ and $W_h$ with boundary conditions
\begin{eqnarray*}
V_h^0 &=&\left\{ \bv\in V_h:\ \bv\times\bn|_{\partial\Omega}=0\right\},\\
W_h^0 &=&\left\{ q\in W_h:\ q|_{\partial\Omega}=0 \right\}.
\end{eqnarray*}

For $T\in\T_h$, we define the jump of $\tau$ as $[\tau]_\pT=\frac12(\tau|_T-\tau|_{T_n})$ and the average of $\tau$ as $\{\tau\}_\pT=\frac12(\tau|_T+\tau|_{T_n})$, where $T_n$ denotes the elements neighboring to $T$.  For $e\in\pT\cap\partial\Omega$, we define $[\tau]_e=\tau$ and $\{\tau\}_e=0$. Then we have the following identities
\begin{eqnarray}
\langle \bq\times\bn,\; \bv\rangle_{\partial\T_h}&=&\langle [\bq]\times\bn,\;\bv\rangle_{\partial\T_h}
+\langle \{\bq\}\times\bn,\;\bv\rangle_{\partial\T_h},\label{key0}\\
\langle \bv\cdot\bn,\; q\rangle_{\partial\T_h}&=&\langle \bv\cdot\bn,\; [q]\rangle_{\partial\T_h}
+\langle \bv\cdot\bn,\;\{q\}\rangle_{\partial\T_h}.\label{key00}
\end{eqnarray}

For a function $\bv\in V_h$, its  weak curl $\nabla_w\times\bv$ is defined as a piecewise vector-valued polynomial such that on each $T\in\T_h$, $\nabla_w\times\bv \in [P_{k-1}(T)]^3$   satisfies  
\begin{equation}\label{d-c}
  (\nabla_w\times\bv, \bvarphi)_T = (\bv, \nabla\times\bvarphi)_T
 -\langle \{\bv\}\times\bn, \bvarphi\rangle_{\partial T},\qquad
   \forall \bvarphi\in [P_{k-1}(T)]^3.
\end{equation}

For a function $q\in W_h$, its  weak gradient $\nabla_w q$  is defined as a piecewise vector-valued polynomial such that on each $T\in\T_h$, $\nabla_w q \in [P_k(T)]^3$  satisfies  
\begin{equation}\label{d-g}
  (\nabla_w q, \bvarphi)_T = -(q, \nabla\cdot\bvarphi)_T
 + \langle \{ q\}, \bvarphi\cdot\bn\rangle_{\partial T},\qquad
   \forall \bvarphi\in [P_k(T)]^3.
\end{equation}

We define a bilinear form with an appropriate
stabilization term as follows:
\begin{eqnarray}\label{Stabilized-A-form}
a(\bv,\ \bw)&=&(\nu\nabla_w\times\bv,\
\nabla_w\times\bw)_{{\cal T}_h}+s_1(\bv,\bw),
\end{eqnarray}
where
\begin{eqnarray}\label{Stabilization-T1}
s_1(\bv,\;\bw) &=& \sum_{T\in {\cal T}_h}h^{-1}_T (\l [\bv]\times\bn,\;\;[\bw]\times\bn\r_\pT
   \\ & & \quad \nonumber +\l
[\bv]\cdot\bn,\;\;[\bw]\cdot\bn\r_{\pT\setminus {\partial\Omega}}).
\end{eqnarray}
We introduce the following bilinear form
\begin{eqnarray}\label{B-form}
b(\bv,\ q)&=&(\bv,\nabla_w q)_{{\cal T}_h},
\end{eqnarray}
and a second stabilization term
\begin{eqnarray}\label{Stabilization-T2}
s_2(p,\;q) & = & \sum_{T\in {\cal T}_h}h_T\l
[p],\;\;[q]\r_\pT.
\end{eqnarray}

We are ready to present the MWG method  for the weak formulation \eqref{weakform} of the time-harmonic Maxwell model equations \eqref{model}.
\begin{algorithm}
Find $\bu_h\in V_h^0$  and
$p_h\in W_h^0$ satisfying
\begin{eqnarray}
a(\bu_h,\ \bv)-b(\bv,\;p_h)&=&({\bf f},\;\bv)\quad\forall\ \bv\in
V_{h}^0,\label{wg1}\\
b(\bu_h,\;q)+s_2(p_h,q)&=&-(g,q) \quad\forall\ q\in W_h^0.\label{wg2}
\end{eqnarray}
\end{algorithm}
\smallskip

\begin{lemma}\label{lemma-ue}
The modified weak Galerkin finite element algorithm (\ref{wg1})-(\ref{wg2})
has a unique solution.
\end{lemma}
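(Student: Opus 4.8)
Since the system \eqref{wg1}--\eqref{wg2} is a finite-dimensional square linear system (the test and trial spaces $V_h^0$ and $W_h^0$ coincide), it suffices to prove that the only solution of the homogeneous problem (with $\bF = 0$ and $g = 0$) is $\bu_h = 0$, $p_h = 0$. The plan is to take $\bv = \bu_h$ in \eqref{wg1} and $q = p_h$ in \eqref{wg2}, then subtract the second equation from the first to eliminate the $b(\cdot,\cdot)$ terms. This yields $a(\bu_h,\bu_h) + s_2(p_h,p_h) = 0$, and since both $a(\bu_h,\bu_h) = (\nu\nabla_w\times\bu_h,\nabla_w\times\bu_h)_{\T_h} + s_1(\bu_h,\bu_h) \ge 0$ and $s_2(p_h,p_h) \ge 0$ (using $\nu = \mu/\epsilon > 0$), we conclude $\nabla_w\times\bu_h = 0$ on each $T$, $s_1(\bu_h,\bu_h) = 0$, and $s_2(p_h,p_h) = 0$.

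From $s_1(\bu_h,\bu_h)=0$ we get $[\bu_h]\times\bn = 0$ on every face and $[\bu_h]\cdot\bn = 0$ on every interior face, which together with $\bu_h\times\bn|_{\partial\Omega}=0$ means $\bu_h$ has continuous tangential components across interior faces and zero tangential trace on $\partial\Omega$; combined with the full normal continuity on interior faces this forces $\bu_h\in H(\curl;\Omega)\cap H(\di;\Omega)$ with $\bu_h\times\bn = 0$ on $\partial\Omega$, and moreover $\{\bu_h\} = \bu_h$ is single-valued on each face. Using the definition \eqref{d-c} of the weak curl together with $\{\bu_h\}=\bu_h$ and integration by parts, $\nabla_w\times\bu_h = 0$ identifies with the $L^2$-projection of the classical piecewise curl; since $\bu_h$ is actually in $H(\curl;\Omega)$ and is piecewise polynomial of degree $k$ while the target space has degree $k-1$, one deduces $\nabla\times\bu_h = 0$ in $\Omega$ in the classical sense. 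Then from $s_2(p_h,p_h)=0$ we get $[p_h]=0$ on all faces, so $p_h\in H_0^1(\Omega)$ and $\{p_h\}=p_h$ is single-valued, and the definition \eqref{d-g} shows $\nabla_w p_h$ agrees with the true gradient $\nabla p_h$.

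It remains to go back to \eqref{wg1} with the now-known vanishing of the curl and stabilization terms: $-b(\bv,p_h) = 0$ for all $\bv\in V_h^0$, i.e. $(\bv,\nabla_w p_h)_{\T_h} = 0$ for all such $\bv$. Since $\nabla_w p_h = \nabla p_h$ is itself a legitimate test direction (it is piecewise polynomial of degree $k-1\le k$ and, because $p_h\in H_0^1$, its tangential trace on $\partial\Omega$ vanishes), choosing $\bv = \nabla p_h$ gives $\|\nabla p_h\|^2 = 0$, hence $p_h$ is constant and, being in $H_0^1(\Omega)$, $p_h = 0$. Finally, with $p_h = 0$ and $\nabla\times\bu_h = 0$, the original equation \eqref{moment1} structure together with \eqref{wg2} ($b(\bu_h,q)=0$ for all $q\in W_h^0$) and the Helmholtz-type decomposition on the discrete level forces $\bu_h = 0$: since $\bu_h\in H_0(\curl;\Omega)$ is curl-free it is a gradient, $\bu_h = \nabla\phi$ for some $\phi\in H_0^1(\Omega)$, and testing \eqref{wg2} appropriately (or re-examining \eqref{wg1} with $\bv=\bu_h$ once more) yields $\bu_h=0$. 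The main obstacle I anticipate is the last step, establishing $\bu_h=0$ from curl-freeness and the discrete divergence condition: one must carefully exploit \eqref{wg2}, the structure of $W_h^0$, and the compatibility between $\nabla_w$ on $W_h^0$ and test functions in $V_h^0$ — essentially a discrete inf-sup / exactness argument showing that the only curl-free field in $V_h^0$ orthogonal to all discrete gradients is zero.
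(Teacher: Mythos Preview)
Your proof follows the same overall strategy as the paper: an energy estimate to kill the weak curl and all jumps, then back-substitution into \eqref{wg1} and \eqref{wg2}. One trivial slip: to cancel the $b(\cdot,\cdot)$ terms you must \emph{add} the two equations, not subtract; your stated conclusion $a(\bu_h,\bu_h)+s_2(p_h,p_h)=0$ is nonetheless correct. Your argument for $p_h=0$ (using $a(\bu_h,\bv)=0$ for all $\bv$ in \eqref{wg1} and testing with $\bv=\nabla p_h\in V_h^0$) is valid and is essentially the paper's argument as well, though you run it \emph{before} establishing $\bu_h=0$ rather than after. This reordering is harmless, since $a(\bu_h,\bv)=0$ follows directly from $\nabla_w\times\bu_h=0$ together with the vanishing jumps and does not require $\bu_h=0$.

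The genuine gap is precisely the one you flag: concluding $\bu_h=0$. You propose a ``discrete inf-sup / exactness'' argument, but the paper does \emph{not} take that route, and such a discrete argument is awkward here: the potential $\phi$ with $\bu_h=\nabla\phi$ is piecewise $P_{k+1}$, not $P_{k-1}$, so you cannot simply test \eqref{wg2} with $q=\phi$. Instead the paper works at the continuous level. From \eqref{wg2} with $s_2(p_h,\cdot)=0$ one has $(\bu_h,\nabla_w q)_{\T_h}=0$; unwinding the definition \eqref{d-g} and using the normal continuity $[\bu_h]\cdot\bn=0$ on interior faces together with $\{q\}=0$ on $\partial\Omega$ gives $(\nabla\cdot\bu_h,\,q)_{\T_h}=0$ for all $q\in W_h^0$, whence $\nabla\cdot\bu_h=0$ on each $T$. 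Since $\bu_h\cdot\bn$ is continuous across interior faces, $\nabla\cdot\bu_h=0$ holds strongly in $\Omega$. Now $\bu_h=\nabla\phi$ yields $\Delta\phi=0$ in $\Omega$, and $\bu_h\times\bn=\nabla\phi\times\bn=0$ on $\partial\Omega$ forces $\phi$ to be constant on $\partial\Omega$; uniqueness for the Dirichlet Laplacian on the simply connected domain $\Omega$ then gives $\phi\equiv\mathrm{const}$, hence $\bu_h=0$. This continuous harmonic-potential step is the missing idea in your proposal.
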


\smallskip

\begin{proof}
It suffices to show that zero is the only solution of
(\ref{wg1})-(\ref{wg2}) if $\bbf=0$ and $g=0$.
 Letting $\bv=\bu_h$
and $q=p_h$ in (\ref{wg1})-(\ref{wg2}) 
gives
\begin{eqnarray*}
 & & (\nu\cw\bu_h,\ \cw\bu_h)_{\T_h}  + \sum_{T\in {\cal T}_h}h^{-1}_T (\l [\bu_h]\times\bn,\;\;[\bu_h]\times\bn\r_\pT \\
  & &\quad +\l
[\bu_h]\cdot\bn,\;\;[\bu_h]\cdot\bn\r_{\pT\setminus {\partial\Omega}}) 
    + \sum_{T\in\T_h}h_T\l [p_h],\ [p_h]\r_\pT=0,
\end{eqnarray*}
which implies $\nabla_w\times\bu_h=0$ on each $T$, $\bu_h$ is continuous in its tangential and normal directions at element boundary and
 $[p_h]=0$ on $\pT$.
Then, it follows from (\ref{d-c}) and the usual 
integration by parts that for any $\bv\in V_h^0$,
\begin{eqnarray*}
0&=&(\cw\bu_h,\bv)\\
&=&(\bu_h,\ \curl\bv)_{\T_h}-\l\{\bu_h\}\times\bn,\ \bv\r_{\partial\T_h}\\
&=&(\curl\bu_h,\ \bv)_{\T_h}+\l\bu_h\times\bn,\ \bv\r_{\pT_h}-\l\{\bu_h\}\times\bn,\ \bv\r_{\partial\T_h}\\
&=&(\curl\bu_h,\ \bv)_{\T_h}+\l [\bu_h]\times\bn,\ \bv\r_{\partial\T_h}\\
&=&(\curl\bu_h,\ \bv)_{\T_h}+\l  \bu_h \times\bn,\ \bv\r_{\partial\Omega}\\
&=&(\curl\bu_h,\ \bv)_{\T_h},
\end{eqnarray*}
which gives $\curl\bu_h=0$ on each $T\in\T_h$ due to $\bu_h\in V_h^0$, which, together with $\bu_h$ is continuous in its tangential and normal directions at element boundary, yields $\curl\bu_h=0$ in the domain $\Omega$.  Using  Recall that  $[p_h]=0$ on $\pT$. It follows from (\ref{wg2}), \eqref{d-g} and the facts $[\bu_h]\cdot\bn=0$ on $e\in\E_h^0$ and $\{q\}=0$ on $\partial\Omega$,
\begin{eqnarray*}
0&=&(\bu_h,\nabla_w q)_{\T_h}=-(\nabla\cdot\bu_h, q)_{\T_h}+\l\bu_h\cdot\bn,\ \{q\}\r_{\partial\T_h}\\
&=&-(\nabla\cdot\bu_h, q)_{\T_h}.
\end{eqnarray*}
Letting $q=\nabla\cdot\bu_h$ in the above equation
yields $\nabla\cdot \bu_h=0$ on each $T\in\T_h$.

Note that $\nabla\times \bu_h=0$ in the domain $\Omega$. Thus, there exists a potential
function $\phi$ such that $\bu_h=\nabla\phi$ in $\Omega$. It follows
from $\nabla\cdot \bu_h=0$ and the fact
that $\bu_h\cdot\bn$ is continuous
that $\Delta\phi=0$ is strongly satisfied in $\Omega$. The boundary
condition of (\ref{bcc1}) implies that
$\bu_h\times\bn=\nabla\phi\times\bn=0$ on $\partial\Omega$.
Therefore, $\phi$ must be a constant on $\partial\Omega$. The
uniqueness of the solution of the Laplace equation implies that
$\phi=const$ is the only solution of $\Delta\phi=0$ if $\Omega$ is
simply connected. Then we must have $\bu_h=\nabla\phi=0$.

Since $\bu_h=0$, we then have $b(\bv,p_h)=0$ for any $\bv\in
V_{h}^0$ by (\ref{wg1}). It follows from the definition of $b(\cdot,\cdot)$ and
\eqref{d-g} that
\begin{eqnarray}\label{eu1}
0&=&b(\bv,\ p_h)=(\bv,\nabla_w
p_h)_{\T_h}\\
&=&-(\nabla\cdot\bv,\ p_h)_{\T_h}+\l \bv\cdot\bn,\ \{p_h\}\r_{\partial\T_h}\nonumber\\
&=&(\bv,\nabla p_h)_{\T_h}-\l \bv\cdot\bn,\ [p_h]\r_{\partial\T_h},\nonumber\\
&=&(\bv,\nabla p_h)_{\T_h},\nonumber
\end{eqnarray}
where we have used the fact that $[p_h]=0$ on $\partial T$. Letting
$\bv=\nabla p_h$ in (\ref{eu1}) gives $\nabla p_h=0$ on each $T\in \T_h$, i.e. $p_h$ is a constant on $T\in\T_h$. Using the fact that  $p_h=[p_h]=0$
 on $\partial\Omega$, we obtain $p_h=0$ in $\Omega$. 
 
 This completes the proof of the Lemma.
 
\end{proof}

\section{Error Equations}\label{Section:ErrorEquation}

Denote by $\bQ_k$ \ and \ $Q_{k-1}$ the element-wise defined $L^2$
projections onto $[P_k(T)]^3$ and $P_{k-1}(T)$ for each element $T\in \T_h$, respectively.
We define
two error functions 
$$
\be_h=\bQ_k\bu -\bu_h, \qquad \epsilon_h=Q_{k-1}p-p_h.
$$ 
Next we will derive the equations that $\be_h$ and $\epsilon_h$ satisfy.
For simplicity of analysis, we assume that the coefficient $\nu$ in (\ref{w1}) is a piecewise constant
function with respect to the finite element partition $\T_h$.

\begin{lemma} For $\bv,\bw\in V_h$, we have
\begin{equation}\label{key}
(\cw \bv, \bw)_{\T_h} = ( \curl\bv, \bw)_{\T_h}+ \l [\bv]\times\bn, \bw\r_{\partial\T_h}.
\end{equation}
\end{lemma}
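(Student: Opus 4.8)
The plan is to prove \eqref{key} element by element and then sum over $T\in\T_h$. Fix $T\in\T_h$ and take the test function $\bvarphi=\bw|_T$ in the definition \eqref{d-c} of the weak curl (this is how \eqref{d-c} is applied in the proof of Lemma~\ref{lemma-ue}); this gives
\[
(\cw\bv,\bw)_T = (\bv,\curl\bw)_T - \l\{\bv\}\times\bn,\bw\r_{\partial T}.
\]

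Next I would integrate by parts on $T$. From the vector identity $\nabla\cdot(\bv\times\bw)=\bw\cdot\curl\bv-\bv\cdot\curl\bw$, the divergence theorem, and the scalar triple-product identity $\bn\cdot(\bv\times\bw)=-(\bv\times\bn)\cdot\bw$, one obtains the Green's formula $(\bv,\curl\bw)_T = (\curl\bv,\bw)_T + \l\bv\times\bn,\bw\r_{\partial T}$. Substituting this into the previous display and combining the two element-boundary terms yields
\[
(\cw\bv,\bw)_T = (\curl\bv,\bw)_T + \l(\bv|_T-\{\bv\})\times\bn,\bw\r_{\partial T}.
\]
I would then invoke the elementary identity $\bv|_T=\{\bv\}_{\partial T}+[\bv]_{\partial T}$, which is immediate from the definitions of $\{\cdot\}_{\partial T}$ and $[\cdot]_{\partial T}$ on an interior face, and which holds on a boundary face $e\subset\partial\Omega$ as well since there $\{\bv\}_e=0$ and $[\bv]_e=\bv$. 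Hence $\bv|_T-\{\bv\}=[\bv]$ on all of $\partial T$, so $(\cw\bv,\bw)_T = (\curl\bv,\bw)_T + \l[\bv]\times\bn,\bw\r_{\partial T}$, and summing over $T\in\T_h$ gives \eqref{key}.

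The argument is essentially bookkeeping in vector calculus, so I do not expect a genuine obstacle. The two places that need attention are getting every sign right in the triple-product step of the integration by parts, so that the element-boundary contribution comes out as exactly $+\l\bv\times\bn,\bw\r_{\partial T}$, and applying the jump/average split $\bv|_T=\{\bv\}+[\bv]$ consistently on interior faces and on boundary faces (where $\{\bv\}$ vanishes and $[\bv]$ reduces to $\bv$).
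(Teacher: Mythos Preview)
Your proof is correct and follows essentially the same route as the paper's: apply the definition \eqref{d-c} with $\bw$ as the test function, integrate by parts to move the curl onto $\bv$, and then collapse the two boundary terms via $\bv|_T-\{\bv\}=[\bv]$. The paper compresses these steps into three displayed lines, while you spell out the vector-calculus identities and the interior/boundary face split more explicitly, but there is no substantive difference.
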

\begin{proof}
Using (\ref{d-c}), the usual integration by parts, and (\ref{key0}), we have
\begin{eqnarray*}
(\cw \bv,\; \bw)_{\T_h} &=& (\bv,\; \curl\bw)_{\T_h} -  \langle
\{\bv\}\times\bn,\; \bw\rangle_{\partial\T_h}\\
&=& (\curl\bv,\; \bw)_{\T_h} + \langle (\bv-\{\bv\})\times\bn,\; \bw\rangle_{\partial\T_h}\\
&=&(\curl\bv,\; \bw)_{\T_h}+\l [\bv]\times\bn, \bw\r_{\partial\T_h}.
\end{eqnarray*}
This completes the lemma.
\end{proof}

\begin{lemma}
Let $(\bu_h, p_h)\in V_h\times W_h$ be the MWG finite element solution arising from
(\ref{wg1}) and (\ref{wg2}). For any $\bv\in V_{h}$ and $q\in W_h$, the following error equations hold true:
\an{
a(\be_h,\ \bv)-b(\bv,\ \epsilon_h)&= -E_1(\bu,\bv)-E_2(\bu,\bv)+E_3(p,\bv)\label{ee1}\\
             &\quad \ +s_1(\bQ_k\bu,\bv), \nonumber \\
b(\be_h,\ q)+s_2(\epsilon_h,\ q)&= E_4(\bu,q)+s_2(Q_{k-1}p,q),\label{ee2} }
where
\begin{eqnarray*}
E_1(\bu,\bv)&=&\l\nu\curl\bu-\bQ_k(\nu\curl\bu),\,[\bv]\times\bn\r_{\partial\T_h},\\
E_2(\bu,\bv)&=&(\bQ_k\nu\curl\bu-\nu\cw\bQ_k\bu,\ \cw\bv)_{\T_h},\\
E_3(p,\bv)&=& \l \{p-Q_{k-1}p\},\,[\bv]\cdot\bn\r_{\partial\T_h},\\
E_4(\bu,q)&=&\l (\bu-\bQ_k\bu)\cdot\bn,\,[q]\r_{\partial\T_h}.
\end{eqnarray*}
\end{lemma}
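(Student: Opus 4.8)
The plan is to derive the two error equations directly from the MWG scheme \eqref{wg1}--\eqref{wg2} by testing the continuous weak formulation with the same discrete test functions and subtracting. First I would take the exact solution $(\bu,p)$ of \eqref{model} and, for $\bv\in V_h$, integrate $\nabla\times(\nu\nabla\times\bu)-\nabla p = \bbf$ against $\bv$ element by element, using integration by parts twice on the curl term and once on the $\nabla p$ term. This produces $(\nu\curl\bu,\curl\bv)_{\T_h}$ plus boundary terms $\l \nu\curl\bu\times\bn,\bv\r_{\partial\T_h}$, and $-(\nabla p,\bv)_{\T_h}$ plus $\l p,\bv\cdot\bn\r_{\partial\T_h}$ (signs to be tracked carefully). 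The key identities \eqref{key0}--\eqref{key00} let me split these $\partial\T_h$-integrals into jump and average parts; the average parts should cancel or regroup because $\nu\curl\bu$ and $p$ are single-valued on interior faces (so $\{\nu\curl\bu\}$ equals the trace and $\l \{\cdot\}\times\bn,\cdot\r$ collapses), leaving only jump-of-$\bv$ terms and boundary-of-$\Omega$ terms that vanish since $\bv\times\bn=0$ there.

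Next I would replace the exact differential operators acting on $\bu$ and $p$ by their weak/projected counterparts so as to match the bilinear forms $a(\cdot,\cdot)$ and $b(\cdot,\cdot)$ evaluated at the projections $\bQ_k\bu$, $Q_{k-1}p$. Concretely, in $(\nu\curl\bu,\curl\bv)_{\T_h}$ I insert $\bQ_k(\nu\curl\bu)$ and use Lemma~\ref{key} (or rather its analogue \eqref{key}) together with the definition \eqref{d-c} of $\cw$ applied to $\bQ_k\bu$; the commutator errors are exactly $E_1$ (the face-projection defect of $\nu\curl\bu$ paired with $[\bv]\times\bn$) and $E_2$ (the defect between $\bQ_k(\nu\curl\bu)$ and $\nu\cw\bQ_k\bu$, paired with $\cw\bv$). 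For the pressure/gradient coupling, using \eqref{d-g} for $\nabla_w(Q_{k-1}p)$ and the fact that $\{Q_{k-1}p\}$ differs from $\{p\}=p$ on interior faces produces $E_3$. The stabilization terms enter because $\bQ_k\bu$ is not conforming: $a(\bQ_k\bu,\bv)$ carries the extra $s_1(\bQ_k\bu,\bv)$, which is why it appears on the right of \eqref{ee1}; subtracting \eqref{wg1} then yields \eqref{ee1} with $\be_h=\bQ_k\bu-\bu_h$, $\epsilon_h=Q_{k-1}p-p_h$.

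For \eqref{ee2} the argument is shorter: test \eqref{cont1}, i.e. $\nabla\cdot(\epsilon\bu)=g_1$ rewritten as $(\bu,\nabla q)=-(g,q)$, against $q\in W_h$. Expanding $b(\bQ_k\bu,q)=(\bQ_k\bu,\nabla_w q)_{\T_h}$ via \eqref{d-g} and integrating by parts back to $-(\nabla\cdot\bQ_k\bu,q)_{\T_h}+\l\bQ_k\bu\cdot\bn,\{q\}\r_{\partial\T_h}$, then comparing with the exact identity $(\bu,\nabla q)_{\T_h}=-(\nabla\cdot\bu,q)_{\T_h}+\l\bu\cdot\bn,q\r_{\partial\T_h}$ and using $\{q\}=q$ on $\E_h^0$ while $q=0$ on $\partial\Omega$, I get the consistency error $\l(\bu-\bQ_k\bu)\cdot\bn,[q]\r_{\partial\T_h}=E_4(\bu,q)$; the $s_2(Q_{k-1}p,q)$ term is added and subtracted so that \eqref{wg2} can be subtracted cleanly, giving \eqref{ee2}.

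The main obstacle I anticipate is bookkeeping rather than any conceptual difficulty: keeping every sign straight through two integrations by parts, correctly deciding which boundary contributions survive after applying \eqref{key0}--\eqref{key00}, and ensuring that the "average" pieces involving $\{\nu\curl\bu\}$ and $\{p\}$ are handled consistently with the one-sided definition of $\{\cdot\}$ on $\partial\Omega$ (where it is set to zero). A secondary subtlety is that $\cw$ and $\nabla_w$ are defined via projections onto $[P_{k-1}(T)]^3$ and $[P_k(T)]^3$ respectively, so each time I invoke \eqref{d-c} or \eqref{d-g} the test polynomial must lie in the correct space; the terms $E_2$ and the use of $\bQ_k$ inside $E_1$ are precisely the artifacts of these projection mismatches, and I would isolate them by adding and subtracting $\bQ_k(\nu\curl\bu)$ at the right moment.
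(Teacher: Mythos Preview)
Your plan is correct and mirrors the paper's proof essentially step for step: test the continuous equations against $\bv\in V_h$ and $q\in W_h$, integrate by parts, insert the projections $\bQ_k$ and $Q_{k-1}$, invoke \eqref{d-c}--\eqref{d-g} and \eqref{key0}--\eqref{key00} to isolate the consistency terms $E_1$--$E_4$, add the stabilizers $s_1(\bQ_k\bu,\bv)$ and $s_2(Q_{k-1}p,q)$, and subtract \eqref{wg1}--\eqref{wg2}. One small correction: in the second-equation argument you wrote ``$\{q\}=q$ on $\E_h^0$,'' but $q\in W_h$ is discontinuous, so this is false; the actual mechanism (as in the paper) is that $\l\bu\cdot\bn,\{q\}\r_{\partial\T_h}=0$ because $\bu$ is single-valued and $\bn$ flips sign across interior faces, while $\{q\}=0$ on $\partial\Omega$ by convention.
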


\begin{proof}
Testing (\ref{moment1}) by $\bv\in V_h$ gives
\begin{equation}\label{m1}
(\curl(\nu\curl\bu),\;\bv)- (\nabla p,\ \bv)=(\bbf,\; \bv).
\end{equation}
Using the usual integration by parts, (\ref{d-c}) and (\ref{key0}), we have
\begin{eqnarray}
 & & (\curl(\nu\curl\bu),\;\bv)_{\T_h}\nonumber\\
 &=&(\nu\curl\bu,\ \curl\bv)_{\T_h}+\l\nu\curl\bu,\,\bv\times\bn\r_{\partial\T_h}\nonumber\\
&=&(\bQ_k(\nu\curl\bu),\ \curl\bv)_{\T_h}+\l\nu\curl\bu,\,[\bv]\times\bn\r_{\partial\T_h}\nonumber\\
&=&(\curl\bQ_k(\nu\curl\bu),\ \bv)_{\T_h}-\l\bQ_k(\nu\curl\bu),\,\bv\times\bn\r_{\partial\T_h}\nonumber\\
& &\quad +\l\nu\curl\bu,\,[\bv]\times\bn\r_{\partial\T_h}\nonumber\\
&=&(\nu\curl\bu,\ \cw\bv)_{\T_h}-\l\bQ_k(\nu\curl\bu),\,[\bv]\times\bn\r_{\partial\T_h}\nonumber\\
& &\quad +\l\nu\curl\bu,\,[\bv]\times\bn\r_{\partial\T_h}\nonumber\\
&=&(\nu\curl\bu,\ \cw\bv)_{\T_h}+\l\nu\curl\bu-\bQ_k(\nu\curl\bu),\,
    [\bv]\times\bn\r_{\partial\T_h}\nonumber\\
&=&(\bQ_k\nu\curl\bu,\ \cw\bv)_{\T_h}+E_1(\bu,\bv)\nonumber\\
&=&(\bQ_k\nu\curl\bu,\ \cw\bv)_{\T_h}+(\nu\cw\bQ_k\bu,\ \cw\bv)_{\T_h}\nonumber\\
& &\quad -(\nu\cw\bQ_k\bu,\ \cw\bv)_{\T_h}+E_1(\bu,\bv)\nonumber\\
&=&(\nu\cw\bQ_k\bu,\ \cw\bv)_{\T_h}+E_2(\bu,\bv)+E_1(\bu,\bv).\label{m7}
\end{eqnarray}
Using the usual integration by parts,  (\ref{d-g}) and (\ref{key00}) gives
\begin{eqnarray}
(\nabla p,\ \bv)&=&-(p,\ \nabla\cdot\bv)_{\T_h}+\l  p,\,\bv\cdot\bn\r_{\partial\T_h}\nonumber\\
&=&-(Q_{k-1} p,\ \nabla\cdot\bv)_{\T_h}+\l p,\,\bv\cdot\bn\r_{\partial\T_h}\nonumber\\
&=&(\nabla_w Q_{k-1} p,\ \bv)_{\T_h}+\l \{p-Q_{k-1}p\},\,\bv\cdot\bn\r_{\partial\T_h}\nonumber\\
&=&(\nabla_w Q_{k-1} p,\ \bv)_{\T_h}+\l \{p-Q_{k-1}p\},\,[\bv]\cdot\bn\r_{\partial\T_h}\nonumber\\
&=&(\nabla_w Q_{k-1} p,\ \bv)_{\T_h}+E_3(p,\bv).\label{m8}
\end{eqnarray}
Combining the two equations (\ref{m7}) and (\ref{m8}) with (\ref{m1}) gives
\a{ &\quad \
(\nu\cw\bQ_k\bu,\ \cw\bv) -(\nabla_w Q_{k-1} p,\ \bv)_T\\
   & =(\bbf,\; \bv)-E_1(\bu,\bv)-E_2(\bu,\bv)+E_3(p,\bv). }
Further we add $s_1(\bQ_k\bu,\bv)$ to the both sides of the above equation  to obtain
\an{ \label{m4}\ad{
a(\bQ_k\bu,\bv)- b(\bv, Q_{k-1} p)&=(\bbf,\; \bv)-E_1(\bu,\bv)\\
                &\quad \ -E_2(\bu,\bv)+E_3(p,\bv)+s_1(\bQ_k\bu,\bv). } }
Subtracting (\ref{wg1}) from (\ref{m4})   yields (\ref{ee1}).

Testing (\ref{cont1}) by $q\in W_h$ gives
\begin{equation}\label{m5}
(\nabla\cdot\bu,\;q)=(g,\; q).
\end{equation}
It follows from the usual integration by parts, (\ref{d-g}) and (\ref{key00})
\begin{eqnarray*}
(\nabla\cdot\bu,\ q)_{\T_h}&=&-(\bu,\ \nabla q)_{\T_h}+\l \bu\cdot\bn,\,q\r_{\partial\T_h}\\
&=&-(\bQ_k \bu,\ \nabla q)_{\T_h}+\l \bu\cdot\bn,\,q\r_{\partial\T_h}\\
&=&(\nabla\cdot\bQ_k\bu,\  q)_{\T_h}+\l (\bu-\bQ_k\bu)\cdot\bn,\,q\r_{\partial\T_h}\\
&=&(\nabla\cdot\bQ_k\bu,\  q)_{\T_h}-\l \bQ_k\bu\cdot\bn,\,\{q\}\r_{\partial\T_h}\\
  && \quad 
 +\l \bQ_k\bu\cdot\bn,\,\{q\}\r_{\partial\T_h}+\l (\bu-\bQ_k\bu)\cdot\bn,\,q\r_{\partial\T_h}\\
&=&-(\bQ_k\bu,\  \nabla_w q)_{\T_h}-\l (\bu-\bQ_k\bu)\cdot\bn,\,\{q\}\r_{\partial\T_h}
  \\ &&\quad +\l (\bu-\bQ_k\bu)\cdot\bn,\,q\r_{\partial\T_h}\\
&=&-(\bQ_k\bu,\  \nabla_w q)_{\T_h}+\l (\bu-\bQ_k\bu)\cdot\bn,\,[q]\r_{\partial\T_h}\\
&=&-(\bQ_k\bu,\  \nabla_w q)_{\T_h}+E_4(\bu,q).
\end{eqnarray*}
Using the equation above, (\ref{m5}) becomes
\[
b(\bQ_k\bu,q)=-(g,\; q)+E_4(\bu,q).
\]
Adding  $s_2(Q_{k-1}p,q)$  to the both sides of the equation above gives
\begin{equation}\label{m6}
b(\bQ_k\bu,q)+s_2(Q_{k-1}p,q)=-(g,\; q)+E_4(\bu,q)+s_2(Q_{k-1}p,q).
\end{equation}
Subtracting  (\ref{wg2}) from (\ref{m6}) gives (\ref{ee2}).

This completes the proof of the lemma.
\end{proof}

\section{Preparation for Error Estimates}\label{Section:L2Projections}

For $\bv\in V_{h}$, we define a semi-norm
\begin{equation}\label{3barnorm}
\3bar \bv\3bar^2=a(\bv,\;\bv)=\sum_{T\in\T_h}\nu\|\cw\bv\|_T^2
   +\sum_{T\in\T_h}h_T^{-1}\|[\bv]\|_\pT^2.
\end{equation}
For $q\in W_h$,  we define another semi-norm  
\begin{equation}\label{qnorm}
|q|_{0,h}^2=\sum_{T\in\T_h}h_T\|[q]\|_\pT^2.
\end{equation}

Let $T$ be an element with $e$ as a face.  For any function $\phi\in
H^1(T)$, the following trace inequality has been proved for
arbitrary polyhedra  $T$ in \cite{wy-mixed}; i.e.,
\begin{equation}\label{trace}
\|\phi\|_{e}^2 \leq C \left( h_T^{-1} \|\phi\|_T^2 + h_T \|\nabla
\phi\|_{T}^2\right).
\end{equation}

If $\phi$ is a polynomial on the element $T\in{\cal T}_h$, we have
from the inverse inequality that
\begin{equation}\label{trace2}
\|\phi\|_{e}^2 \leq Ch_T^{-1}\|\phi\|_T^2.
\end{equation}

\begin{lemma}\label{Lemma:myestimates}
Let $(\bw,p)\in [H^{t+1}(\Omega)]^3\times H^t(\Omega)$ with $\bw\times\bn=0$ and $p=0$ on $\partial\Omega$ and $(\bv,q)\in
V_h\times W_h$ with $\frac12 <t\le k$. Then
\begin{eqnarray}
|s_1(\bQ_k\bw,\ \bv)| &\le& Ch^t\|\bw\|_{t+1} \3bar\bv\3bar,\label{s--1}\\
|s_2(Q_{k-1} p,\ q)|&\le& Ch^t\|p\|_{t}\ |q|_{0,h},\label{s2}\\
|E_1(\bw,\bv)|&\le& Ch^t\|\bw\|_{t+1} \3bar\bv\3bar,\label{E1}\\
|E_2(\bw,\bv)|&\le& Ch^t\|\bw\|_{t+1} \3bar\bv\3bar,\label{E2}\\
|E_3(p,\bv)|&\le& Ch^t\|p\|_{t}\3bar\bv\3bar, \label{E3}\\
|E_4(\bw,q)|&\le& Ch^t\|\bw\|_{t+1}|q|_{0,h}.\label{E4}
\end{eqnarray}
\end{lemma}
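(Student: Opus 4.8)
The plan is to estimate each of the six quantities separately, in each case reducing the bound to a sum of local contributions on elements $T\in\T_h$ and then invoking standard $L^2$-projection approximation estimates together with the trace inequalities \eqref{trace}--\eqref{trace2}. The governing tool throughout will be the approximation property of the $L^2$-projections: for $\bw\in[H^{t+1}(T)]^3$ one has $\|\bw-\bQ_k\bw\|_T + h_T\|\nabla(\bw-\bQ_k\bw)\|_T\le Ch_T^{t+1}\|\bw\|_{t+1,T}$, and likewise for $Q_{k-1}$ applied to $p\in H^t(T)$ with exponent $t$; combining these with \eqref{trace} gives the face bounds $\|\bw-\bQ_k\bw\|_e\le Ch_T^{t+1/2}\|\bw\|_{t+1,T}$ and $\|p-Q_{k-1}p\|_e\le Ch_T^{t-1/2}\|p\|_{t,T}$. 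I would also record once, from \eqref{key}, the $L^2$-bound $\|\cw\bv\|_{\T_h}\le C\3bar\bv\3bar$ (in fact $\|\cw\bv\|_{\T_h}\le\nu^{-1/2}\3bar\bv\3bar$ up to constants), which is what lets the $E_2$ term close.

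For \eqref{s--1}, since $\bw\in H^{t+1}(\Omega)$ with $\bw\times\bn=0$ on $\partial\Omega$ is single-valued, $[\bw]=0$ on interior faces and on $\partial\Omega$ one still has $\bw\times\bn=0$; hence $[\bQ_k\bw]\times\bn=[\bQ_k\bw-\bw]\times\bn$ on every face and similarly $[\bQ_k\bw]\cdot\bn=[\bQ_k\bw-\bw]\cdot\bn$ on interior faces. Then Cauchy--Schwarz on $s_1$ gives $|s_1(\bQ_k\bw,\bv)|\le\big(\sum_T h_T^{-1}\|[\bQ_k\bw-\bw]\|_\pT^2\big)^{1/2}\3bar\bv\3bar$, and the trace/approximation estimates bound the first factor by $Ch^t\|\bw\|_{t+1}$. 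The term \eqref{s2} is handled identically: $p$ is single-valued so $[Q_{k-1}p]=[Q_{k-1}p-p]$ on faces, Cauchy--Schwarz against $|q|_{0,h}$, and $\sum_T h_T\|[Q_{k-1}p-p]\|_\pT^2\le Ch^{2t}\|p\|_t^2$. For \eqref{E1}, write $\nu\curl\bw-\bQ_k(\nu\curl\bw)$, note $\nu$ is piecewise constant so $\nu\curl\bw\in[H^t(T)]^3$, apply the face approximation estimate with exponent $t$ (since $\curl\bw$ is one order less smooth than $\bw$), pair against $\|[\bv]\times\bn\|_\pT\le\|[\bv]\|_\pT$ and sum with the $h_T^{-1}$ weight absorbed into $\3bar\bv\3bar$. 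For \eqref{E3} and \eqref{E4} the same template applies: Cauchy--Schwarz on $\partial\T_h$, use $\|\{p-Q_{k-1}p\}\|_\pT$ or $\|(\bu-\bQ_k\bu)\cdot\bn\|_\pT$ bounded by the projection-plus-trace estimates, and pair against $\|[\bv]\cdot\bn\|_\pT$ (absorbed into $\3bar\bv\3bar$ via the $h_T^{-1}$ scaling) or $\|[q]\|_\pT$ (absorbed into $|q|_{0,h}$ via the $h_T$ scaling), noting the exponents work out because the $h_T^{\pm1/2}$ from the trace term combines with the $h_T^{\mp1/2}$ from the norm weight.

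The one estimate requiring a genuinely different argument is \eqref{E2}, where $E_2(\bw,\bv)=(\bQ_k(\nu\curl\bw)-\nu\cw\bQ_k\bw,\cw\bv)_{\T_h}$ is a volume term, not a face term. Here I would show $\|\bQ_k(\nu\curl\bw)-\nu\cw\bQ_k\bw\|_T\le Ch_T^t\|\bw\|_{t+1,T}$ by comparing both quantities to $\nu\curl\bw$: the term $\nu\curl\bw-\bQ_k(\nu\curl\bw)$ is directly estimated by projection, while $\nu\curl\bw-\nu\cw\bQ_k\bw$ is controlled by testing the definition \eqref{d-c} of $\cw\bQ_k\bw$ against $\bvarphi\in[P_{k-1}(T)]^3$, integrating by parts the exact curl, and seeing that the discrepancy is $\langle(\bw-\{\bQ_k\bw\})\times\bn,\bvarphi\rangle_{\partial T}$; since $\{\bQ_k\bw\}=\bw+(\{\bQ_k\bw\}-\bw)$ and across a shared face the averaged projection still satisfies $\|\bw-\{\bQ_k\bw\}\|_e\le Ch_T^{t+1/2}\|\bw\|_{t+1}$ on the two elements sharing $e$, a trace/inverse estimate on $\bvarphi$ gives the local bound $Ch_T^t\|\bw\|_{t+1,T}$. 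Then Cauchy--Schwarz over $\T_h$ and the bound $\|\cw\bv\|_{\T_h}\le C\3bar\bv\3bar$ finish it. The main obstacle is precisely this commutator-type estimate for $\cw\bQ_k\bw$ versus $\bQ_{k-1}$ applied to the true curl — making sure the averaging operator on inter-element faces does not destroy the approximation order, and tracking the exponents carefully so that the result is $h^t$ rather than $h^{t-1}$. Everything else is a routine application of Cauchy--Schwarz plus the trace inequality, assembled element by element.
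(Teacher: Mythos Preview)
Your proposal is correct and uses the same ingredients as the paper: for $s_1,s_2,E_1,E_3,E_4$ you replace jumps of projections by jumps of projection errors (since $\bw$ and $p$ are single-valued), apply Cauchy--Schwarz face by face, and close with the trace and approximation estimates, exactly as the paper does. The only place where the route differs is $E_2$. The paper avoids your triangle inequality through $\nu\curl\bw$ by deriving directly the global identity
\[
(\bQ_k\curl\bw-\cw\bQ_k\bw,\bv)_{\T_h}=\langle\{\bw-\bQ_k\bw\},[\bv]\times\bn\rangle_{\partial\T_h},
\]
which is your local discrepancy summed over elements so that only the jump $[\bv]$ survives; it then sets $\bv=\bQ_k\curl\bw-\cw\bQ_k\bw$ to obtain $\|\bQ_k\curl\bw-\cw\bQ_k\bw\|\le Ch^t\|\bw\|_{t+1}$ in one step, and finishes with Cauchy--Schwarz against $\cw\bv$. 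One minor imprecision in your version: testing only against $\bvarphi\in[P_{k-1}(T)]^3$ controls $\|\bQ_{k-1}(\nu\curl\bw)-\nu\cw\bQ_k\bw\|_T$, not the full $\|\nu\curl\bw-\nu\cw\bQ_k\bw\|_T$, since $\nu\curl\bw$ is not a polynomial. This is harmless for the final estimate because $\cw\bv\in[P_{k-1}]^3$ anyway, but it means your intermediate comparison should pass through $\bQ_{k-1}(\nu\curl\bw)$ rather than $\nu\curl\bw$ itself.
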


\begin{proof}
Using the trace inequality (\ref{trace}), the Cauchy-Schwarz inequality, and the properties of the projection operators $\bQ_k$ and $Q_{k-1}$, we have
\begin{eqnarray*} & &|s_1(\bQ_k\bw,\ \bv)|\\
&=&|\sum_{T\in\T_h} h_T^{-1}(\langle [\bQ_k\bw]\times\bn,\; [\bv]\times\bn\rangle_\pT+\langle [\bQ_k\bw]\cdot\bn,\; [\bv]\cdot\bn\rangle_{\pT\setminus\partial\Omega})|\\
&=&|\sum_{T\in\T_h} h_T^{-1}(\langle [\bQ_k\bw-\bw]\times\bn,\; [\bv]\times\bn\rangle_\pT\\
 &&\quad +\langle [\bQ_k\bw-\bw]\cdot\bn,\; [\bv]\cdot\bn\rangle_{\pT\setminus\partial\Omega})|\\
&\le& C\left(\sum_{T\in\T_h}(h_T^{-2}\|\bQ_k\bw-\bw\|_T^2+\|\nabla (\bQ_k\bw-\bw)\|_T^2)\right)^{1/2} \3bar\bv\3bar\\
&\le& Ch^t\|\bw\|_{t+1} \3bar\bv\3bar,
\end{eqnarray*}
and
\begin{eqnarray*}
|s_2(Q_{k-1} p,\ q)|&=&|\sum_{T\in\T_h} h_T\langle [Q_{k-1} p],\; [q]\rangle_\pT|\\
&\le &|\sum_{T\in\T_h}h_T |\langle [Q_{k-1}p- p],\; [q]\rangle_\pT|\\
&\le& Ch^t\|p\|_{t}\ |q|_{0,h}.
\end{eqnarray*}

Similarly, we have  that
\begin{eqnarray*}
|E_1(\bw,\ \bv)|&=& |\l\nu\curl\bw-\bQ_k(\nu\curl\bw),\,[\bv]\times\bn\r_{\partial\T_h}|\\
&\le& (\sum_{T\in\T_h}h_T\|(I-\bQ_k)\curl\bw\|_\pT^2)^{1/2}\3bar\bv\3bar\\
&\le& Ch^t\|\bw\|_{t+1} \3bar\bv\3bar,
\end{eqnarray*}
and
\begin{eqnarray*}
|E_3(p,\bv)|&=& |\l \{p-Q_{k-1}p\},\,[\bv]\cdot\bn\r_{\partial\T_h}|\\
&\le& (\sum_{T\in\T_h}h_T\|p-Q_{k-1}p\|_\pT^2)^{1/2}\3bar\bv\3bar\\
&\le& Ch^t\|p\|_{t} \3bar\bv\3bar,
\end{eqnarray*}
and
\begin{eqnarray*}
|E_4(\bw,q)|&=&|\l (\bw-\bQ_k\bw)\cdot\bn,\,[q]\r_{\partial\T_h}|\\
&\le& (\sum_{T\in\T_h}h_T^{-1}\|\bw-\bQ_k\bw\|_\pT^2)^{1/2} |q|_{0,h}\\
&\le& Ch^t\|\bw\|_{t+1} |q|_{0,h}.
\end{eqnarray*}

In order to bound $E_2(\bw,\bv)$, for $\bv\in V_h$, it follows from (\ref{d-c}) and (\ref{key0}) that
\an{\label{e10}\ad{
 &\quad \ (\bQ_k\curl\bw-\cw\bQ_k\bw,\ \bv)_{\T_h}\\
&=  (\bQ_k\curl\bw,\ \bv)_{\T_h}-(\cw\bQ_k\bw,\ \bv)_{\T_h}\\
&=  (\bw,\ \curl\bv)_{\T_h}-\l\bw\times\bn, \bv\r_{\partial\T_h}\\
   &\quad\ -(\bQ_k\bw,\ \curl\bv)_{\T_h}-\l \{\bQ_k\bw\}\times\bn, \bv\r_{\partial\T_h}\\
&=  (\bw,\ \curl\bv)_{\T_h}-\l\bw\times\bn, [\bv]\r_{\partial\T_h}-(\bQ_k\bw,\ \curl\bv)_{\T_h}
  \\&\quad \ -\l \{\bQ_k\bw\}\times\bn, [\bv]\r_{\partial\T_h}\\
&= (\bw-\bQ_k\bw,\ \curl\bv)_{\T_h}+\l \{\bw-\bQ_k\bw\}, [\bv]\times\bn\r_{\partial\T_h}\\
&= \l \{\bw-\bQ_k\bw\}, [\bv]\times\bn\r_{\partial\T_h}.  } }
Letting $\bv=\bQ_k\curl\bw-\cw\bQ_k\bw$ in \eqref{e10} and using the trace inequalities \eqref{trace}-\eqref{trace2} gives
\begin{eqnarray}
\|\bQ_k\curl\bw-\cw\bQ_k\bw\|\le Ch^t\|\bw\|_{t+1}.\label{p1}
\end{eqnarray}
Using Cauchy-Schwarz inequality and  the estimate (\ref{p1}) implies
\begin{eqnarray*}
|E_2(\bw,\bv)|&=&|(\bQ_k(\nu(\curl\bw)-\nu\cw\bQ_k\bw),\ \cw\bv)_{\T_h}|\\
&\le& Ch^t\|\bw\|_{t+1} \3bar\bv\3bar.
\end{eqnarray*}

This completes the proof of the lemma.
\end{proof}

\section{Error Estimates}\label{Section:H1ErrorEstimates}

The objective of this section is to establish the optimal order
error estimates for $\bu_h$ and $p_h$ in certain discrete norms.

\begin{lemma}\label{Lemma:inf-sup}
For any $q\in W_h$, there exist a $\bv_q\in V_{h}$ with $\bv|_T=h_T^2\nabla q$ such that
\begin{equation}\label{inf-sup}
b(\bv_q,q)\ge \sum_{T\in\T_h} h_T^2 \|\nabla q\|_{T}^2-C \sum_{T\in\T_h} h_T\|[q]\|_\pT^2
\end{equation}
and
\begin{equation}\label{inf-sup-boundedness}
\3bar \bv_q\3bar^2 \leq C \sum_{T\in\T_h} h_T^2 \|\nabla q\|_T^2,
\end{equation}
where $C$ is a constant independent of $h$.
\end{lemma}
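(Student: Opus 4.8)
The plan is to take the natural candidate $\bv_q$ with $\bv_q|_T = h_T^2 \nabla q$ for each $T\in\T_h$ and verify the two inequalities directly from the definition of $b$ and the weak gradient $\nabla_w$. First I would unfold $b(\bv_q, q) = (\bv_q, \nabla_w q)_{\T_h}$ and use definition \eqref{d-g} of the weak gradient together with integration by parts, exactly as in the proof of Lemma~\ref{lemma-ue} (see \eqref{eu1}): one gets
\[
b(\bv_q, q) = -(\nabla\cdot\bv_q, q)_{\T_h} + \l \bv_q\cdot\bn, \{q\}\r_{\partial\T_h}
= (\bv_q, \nabla q)_{\T_h} - \l \bv_q\cdot\bn, [q]\r_{\partial\T_h},
\]
where the last step uses $\{q\} = q - [q]$ on interior faces and $\{q\}=0$ on $\partial\Omega$ together with ordinary integration by parts on each $T$. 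Since $\bv_q|_T = h_T^2\nabla q$, the volume term is exactly $\sum_T h_T^2 \|\nabla q\|_T^2$, which is the leading term we want.

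The remaining task for \eqref{inf-sup} is to absorb the face term $\l \bv_q\cdot\bn, [q]\r_{\partial\T_h} = \sum_T h_T^2 \l \nabla q\cdot\bn, [q]\r_\pT$. Here I would apply Cauchy--Schwarz on each face, then the trace inequality \eqref{trace2} for the polynomial $\nabla q$ on $T$ (giving $\|\nabla q\|_\pT^2 \le C h_T^{-1}\|\nabla q\|_T^2$), and Young's inequality to split the product: $h_T^2 |\l \nabla q\cdot\bn, [q]\r_\pT| \le \frac12 h_T^3 \|\nabla q\|_\pT^2 \cdot (\text{something}) + \dots$. More carefully, I would write $h_T^2 \|\nabla q\|_\pT \|[q]\|_\pT \le \delta h_T^3 \|\nabla q\|_\pT^2 + \frac{1}{4\delta} h_T \|[q]\|_\pT^2 \le C\delta h_T^2\|\nabla q\|_T^2 + \frac{1}{4\delta} h_T\|[q]\|_\pT^2$, and choose $\delta$ small enough that the first term is absorbed into at most half of $\sum_T h_T^2\|\nabla q\|_T^2$, leaving a negative multiple of $\sum_T h_T\|[q]\|_\pT^2$ on the right. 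This yields \eqref{inf-sup} possibly with the constant $1$ replaced by $1/2$ in front of the leading term — I would therefore either state the bound with that factor absorbed into the claim or, more cleanly, simply keep $\sum_T h_T^2\|\nabla q\|_T^2$ minus the stabilization term by noting the factor can be normalized; in the write-up I will follow whatever constant convention the statement demands, treating $C$ as a generic constant.

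For the boundedness estimate \eqref{inf-sup-boundedness}, I would compute $\3bar\bv_q\3bar^2 = \sum_T \nu\|\cw\bv_q\|_T^2 + \sum_T h_T^{-1}\|[\bv_q]\|_\pT^2$. For the weak curl term, since $\cw$ maps into $[P_{k-1}(T)]^3$ and is bounded via its definition \eqref{d-c} together with trace/inverse inequalities (as already exploited in deriving \eqref{p1}), and since $\bv_q|_T = h_T^2\nabla q$ has $\|\bv_q\|_T = h_T^2\|\nabla q\|_T$, one gets $\|\cw\bv_q\|_T \le C h_T^{-1}\|\bv_q\|_T + C\,h_T^{-1/2}\|\bv_q\|_\pT \le C h_T\|\nabla q\|_T$ by \eqref{trace2}; hence $\sum_T\nu\|\cw\bv_q\|_T^2 \le C\sum_T h_T^2\|\nabla q\|_T^2$. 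For the jump term, $\|[\bv_q]\|_\pT \le \|\bv_q\|_\pT$ on each side (by definition of the jump as a half-difference), and $h_T^{-1}\|\bv_q\|_\pT^2 \le h_T^{-1}\cdot C h_T^{-1}\|\bv_q\|_T^2 = C h_T^{-2}\cdot h_T^4\|\nabla q\|_T^2 = C h_T^2\|\nabla q\|_T^2$, again by \eqref{trace2}. Summing gives \eqref{inf-sup-boundedness}.

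The main obstacle I anticipate is not any single estimate — they are all routine trace/inverse-inequality manipulations — but rather bookkeeping the shape-regularity-dependent constants so that the absorption in \eqref{inf-sup} is clean, in particular making sure the constant hidden in the trace inequality \eqref{trace2} for $\nabla q$ does not depend on $q$ or on which element is adjacent across a face (this is where shape regularity of $\T_h$ and the assumption that neighboring elements have comparable diameters enter). A secondary point to be careful about is the definition of $[\cdot]$ as a \emph{half}-difference rather than a full jump, so that no stray factors of $2$ propagate into the final constants; I would state this explicitly when passing from $[\bv_q]$ to $\bv_q$ on each side of a face.
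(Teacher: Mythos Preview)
Your approach is essentially identical to the paper's: the same integration-by-parts identity $b(\bv,q)=(\bv,\nabla q)_{\T_h}-\l\bv\cdot\bn,[q]\r_{\partial\T_h}$, the same Cauchy--Schwarz plus trace-inequality \eqref{trace2} absorption of the face term, and the same direct trace/inverse estimate for $\3bar\bv_q\3bar$. The constant issue you flag is handled in the paper by simply taking $\bv|_T=2h_T^2\nabla q$ in the proof (notwithstanding the factor in the statement), so that the leading volume term starts at $2\sum_T h_T^2\|\nabla q\|_T^2$ and one full copy survives after absorbing the cross term.
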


\begin{proof}
For a given $q\in W_h$ and $\bv\in V_{h}$, it follows from the usual integration by parts, (\ref{key00}) and \eqref{d-g} that
\begin{equation*}
\begin{split}
b(\bv,q) &= (\bv, \nabla_w q)_{\T_h} \\
&= \l\bv\cdot\bn, \{q\} \r_{\partial\T_h}-
(\nabla\cdot\bv, q)_{\T_h}\\
&=  (\bv, \nabla q)_{\T_h} -\l\bv\cdot\bn,
[q]\r_{\partial\T_h}.\end{split}
\end{equation*}
By choosing $\bv|_T=2h_T^2\nabla q$, we arrive at
\begin{equation*}
b(\bv,q) = 2\sum_{T\in\T_h} h_T^2(\nabla q, \nabla q)_{T}
-\sum_{T\in\T_h} 2h_T^2\l\nabla q\cdot\bn, [q]\r_{\pT}.
\end{equation*}
Now by the Cauchy-Schwarz inequality, the trace inequality
(\ref{trace2}) and the inverse inequality we obtain
\begin{equation*}
\begin{split}
b(\bv,q) & \ge  2\sum_{T\in\T_h} h_T^2(\nabla q, \nabla q)_{T}
-2\sum_{T\in\T_h} h_T^2\|\nabla q\cdot\bn\|_{\pT} \|[q]\|_{\pT}\\
&\ge \sum_{T\in\T_h}2h_T^2(\nabla q, \nabla q)_T - \sum_{T\in\T_h}Ch_T^{1.5}\|\nabla q\|_{T} \|[q]\|_{\pT}\\
&\ge \sum_{T\in\T_h}h_T^2\|\nabla q\|_T^2 - C\sum_{T\in\T_h}h_T \|[q]\|_{\pT}^2,
\end{split}
\end{equation*}
which gives rise to the inequality (\ref{inf-sup}). The boundedness
estimate (\ref{inf-sup-boundedness}) can be obtained by computing the
triple bar norm of $\bv_q$ directly with the trace and the inverse inequalities. This completes the proof of the
lemma.
\end{proof}

\begin{theorem}\label{h1-bd}
Let $(\bu, p)\in [H^{t+1}(\Omega)]^3\times [H^{t}(\Omega)]$ with $\frac12< t\le k$ and $(\bu_h,p_h)\in
V_h\times W_h$ be the solution of (\ref{moment1})-(\ref{bc1}) and
(\ref{wg1})-(\ref{wg2}) respectively. There holds
\begin{eqnarray}
\3bar\be_h\3bar+|\epsilon_h|_{0,h}&\le&
Ch^t(\|\bu\|_{t+1}+\|p\|_t),\label{err1}\\
(\sum_{T\in\T_h} h_T^2 \|\nabla\epsilon_h\|_T^2)^\frac12 &\leq&Ch^t(\|\bu\|_{t+1}+\|p\|_t).
\label{err1-secondpart}
\end{eqnarray}
\end{theorem}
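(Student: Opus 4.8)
The plan is to derive the two estimates by combining the error equations \eqref{ee1}--\eqref{ee2} with the bounds collected in Lemma \ref{Lemma:myestimates} and the inf--sup-type estimate of Lemma \ref{Lemma:inf-sup}. First I would test \eqref{ee1} with $\bv=\be_h$ and \eqref{ee2} with $q=\epsilon_h$ and subtract the second from the first: the two occurrences of $b(\be_h,\epsilon_h)$ cancel, leaving
\[
a(\be_h,\be_h)+s_2(\epsilon_h,\epsilon_h)
= -E_1(\bu,\be_h)-E_2(\bu,\be_h)+E_3(p,\be_h)+s_1(\bQ_k\bu,\be_h)
 -E_4(\bu,\epsilon_h)-s_2(Q_{k-1}p,\epsilon_h).
\]
By definition $a(\be_h,\be_h)=\3bar\be_h\3bar^2$ and $s_2(\epsilon_h,\epsilon_h)=|\epsilon_h|_{0,h}^2$. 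Each term on the right is estimated by the corresponding inequality in Lemma \ref{Lemma:myestimates}: the terms carrying $\be_h$ are bounded by $Ch^t(\|\bu\|_{t+1}+\|p\|_t)\,\3bar\be_h\3bar$ and those carrying $\epsilon_h$ by $Ch^t(\|\bu\|_{t+1}+\|p\|_t)\,|\epsilon_h|_{0,h}$. A Young's inequality absorption of $\3bar\be_h\3bar$ and $|\epsilon_h|_{0,h}$ into the left-hand side then yields \eqref{err1}.

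For \eqref{err1-secondpart} I would use Lemma \ref{Lemma:inf-sup}. Given $q=\epsilon_h\in W_h$, take the associated test function $\bv_q\in V_h$; from \eqref{inf-sup},
\[
\sum_{T\in\T_h}h_T^2\|\nabla\epsilon_h\|_T^2 \le b(\bv_q,\epsilon_h)+C\sum_{T\in\T_h}h_T\|[\epsilon_h]\|_\pT^2
 = b(\bv_q,\epsilon_h)+C|\epsilon_h|_{0,h}^2 .
\]
The last term is already controlled by \eqref{err1}. To handle $b(\bv_q,\epsilon_h)$, I would solve for it from the error equation \eqref{ee1} with $\bv=\bv_q$:
\[
b(\bv_q,\epsilon_h)= a(\be_h,\bv_q)+E_1(\bu,\bv_q)+E_2(\bu,\bv_q)-E_3(p,\bv_q)-s_1(\bQ_k\bu,\bv_q).
\]
Using Cauchy--Schwarz on $a(\be_h,\bv_q)\le \3bar\be_h\3bar\,\3bar\bv_q\3bar$ together with the four relevant bounds of Lemma \ref{Lemma:myestimates} applied to $\bv_q$, every term is dominated by $\big(\3bar\be_h\3bar+Ch^t(\|\bu\|_{t+1}+\|p\|_t)\big)\3bar\bv_q\3bar$. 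By the boundedness estimate \eqref{inf-sup-boundedness}, $\3bar\bv_q\3bar\le C(\sum_T h_T^2\|\nabla\epsilon_h\|_T^2)^{1/2}$, so this quantity can again be absorbed into the left-hand side via Young's inequality, and invoking \eqref{err1} for $\3bar\be_h\3bar$ finishes the proof.

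The step I expect to require the most care is the second one: one must make sure that the test function $\bv_q$ genuinely lies in $V_h$ (it does, being a piecewise polynomial of the form $h_T^2\nabla\epsilon_h$ with $\nabla\epsilon_h\in [P_{k-2}(T)]^3\subset[P_k(T)]^3$), that Lemma \ref{Lemma:myestimates}'s estimates may legitimately be applied with $\bv=\bv_q$ (they require only $\bv_q\in V_h$, with no boundary condition), and that the constants from \eqref{inf-sup}, \eqref{inf-sup-boundedness} and the trace/inverse inequalities are tracked so that the Young's inequality absorption is valid. The first estimate \eqref{err1} is comparatively routine once the error equations and Lemma \ref{Lemma:myestimates} are in hand.
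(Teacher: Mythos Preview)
Your proposal is correct and follows essentially the same route as the paper's proof: test the error equations with $(\be_h,\epsilon_h)$ to obtain \eqref{err1}, then feed $\bv_{\epsilon_h}=h_T^2\nabla\epsilon_h$ into \eqref{ee1} and combine with Lemma~\ref{Lemma:inf-sup} for \eqref{err1-secondpart}. The only slip is verbal: to make the $b(\be_h,\epsilon_h)$ terms cancel and get $a(\be_h,\be_h)+s_2(\epsilon_h,\epsilon_h)$ on the left you must \emph{add} (not subtract) the two tested error equations, which flips the signs you wrote on $E_4$ and $s_2(Q_{k-1}p,\epsilon_h)$; this is harmless for the estimate since only absolute values are used.
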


\smallskip

\begin{proof}
By letting $\bv=\be_h$ in (\ref{ee1}) and $q=\epsilon_h$ in
(\ref{ee2}) and adding the two resulting equations, we have
\an{ \ad{
\3bar\be_h\3bar^2+|\epsilon_h|_{0,h}^2&= -E_1(\bu,\be_h)-E_2(\bu,\be_h)+E_3(p,\be_h)\\
&\quad \ +s_1(\bQ_k\bu,\be_h)+E_4(\bu,\epsilon_h)+s_2(Q_{k-1} p, \epsilon_h). }
\label{main} }
Substituting the estimates (\ref{s--1})-(\ref{E4}) into (\ref{main}) yields
\begin{eqnarray}
\3bar\be_h\3bar^2+|\epsilon_h|_{0,h}^2&\le&Ch^t(\|\bu\|_{t+1}+\|p\|_t)(\3bar \be_h\3bar+ |\epsilon_h|_{0,h})\label{b-u}
\end{eqnarray}
which implies the error estimate (\ref{err1}).

It follows from (\ref{ee1}) with $\bv|_T=\bv_{\epsilon_h}|_T=h_T^2\nabla \epsilon_h$ that
\a{
b(\bv_{\epsilon_h},\ \epsilon_h)=a(\be_h,\ & \bv_{\epsilon_h})+E_1(\bu,\bv_{\epsilon_h})+E_2(\bu,\bv_{\epsilon_h})\\ & -E_3(p,\bv_{\epsilon_h})-s_1(\bQ_k\bu,\bv_{\epsilon_h}),
 }
 which, together with Cauchy-Schwarz inequality, (\ref{inf-sup}), and (\ref{s--1})-(\ref{E4}), yields
\begin{equation}\label{raining.100}
\begin{split} &\quad \ 
\sum_{T\in\T_h} h_T^2\|\nabla\epsilon_h\|_{T}^2\\
&\leq |b(\bv_{\epsilon_h},
\epsilon_h)| + C |\epsilon_h|_{0,h}^2\\
&\leq |a(\be_h,\ \bv_{\epsilon_h})|+|E_1(\bu,\bv_{\epsilon_h})|+|E_2(\bu,\bv_{\epsilon_h})|\\
&+|E_3(p,\bv_{\epsilon_h})|+|s_1(\bQ_k\bu,\bv_{\epsilon_h})|+C |\epsilon_h|_{0,h}^2\\
&\leq \3bar \be_h\3bar\ \3bar \bv_{\epsilon_h}\3bar +
h^t(\|\bu\|_{t+1}+\|p\|_t) \3bar\bv_{\epsilon_h}\3bar + C
 |\epsilon_h|_{0,h}^2\\
&\leq C\Big(\3bar \be_h\3bar + h^t(\|\bu\|_{t+1}+\|p\|_t)\Big)
(\sum_{T\in\T_h} h_T^2 \|\nabla\epsilon_h\|_T^2)^\frac12 + C |\epsilon_h|_{0,h}^2,
\end{split}
\end{equation}
where we have used the estimate (\ref{inf-sup-boundedness})  in the
last inequality. The estimate (\ref{err1-secondpart}) can be obtained  from (\ref{raining.100}) and
(\ref{err1}). 

This completes the proof of the theorem.
\end{proof}

\section{An Error Estimate in $L^2$ norm}\label{Section:L2ErrorEstimates}

We consider an auxiliary problem that seeks
$(\bpsi,\xi)$ satisfying
\begin{equation}\label{dual-problem}
\begin{split}
\curl(\nu \curl\bpsi)-\nabla \xi &=\be_h\quad \mbox{in}\;\Omega,\\
\nabla\cdot\bpsi&=0\quad\mbox{in}\;\Omega,\\
\bpsi\times\bn &= 0\quad\mbox{on}\;\partial\Omega,\\
\xi &= 0\quad\mbox{on}\;\partial\Omega.\\
\end{split}
\end{equation}
Assume that the problem (\ref{dual-problem}) has the
$[H^{1+s}(\Omega)]^3\times H^s(\Omega)$-regularity property in the
sense that the solution $(\bpsi, \xi)\in [H^{1+s}(\Omega)]^3\times
H^s(\Omega)$ and the following a priori estimate holds true:
\begin{equation}\label{reg}
\|\bpsi\|_{1+s}+\|\xi\|_s\le C\|\be_h\|,
\end{equation}
where $0< s\le 1$.

\medskip
\begin{theorem} \label{L2-bd}
Let $(\bu, p)\in  [H^{t+1}(\Omega)]^3\times [H^{t}(\Omega)]$ and $(\bu_h,p_h)\in V_h\times W_h$ be the
solutions of (\ref{moment1})-(\ref{bc1}) and (\ref{wg1})-(\ref{wg2}),
respectively. Let $\frac12 <t\le k$ and $0 < s \le 1$. Then,
\begin{equation}\label{l2-error}
\|\bQ_k\bu-\bu_h\|\le Ch^{t+s}(\|\bu\|_{t+1}+\|p\|_t).
\end{equation}
\end{theorem}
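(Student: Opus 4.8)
The plan is to use a standard duality (Aubin–Nitsche) argument based on the auxiliary problem \eqref{dual-problem}. First I would test the dual equations against $\be_h$: writing $\|\be_h\|^2 = (\curl(\nu\curl\bpsi) - \nabla\xi,\ \be_h)$ and integrating by parts element-by-element, I expect to express $\|\be_h\|^2$ in terms of the weak-curl and weak-gradient pairings that appear in $a(\cdot,\cdot)$ and $b(\cdot,\cdot)$, plus interface terms coming from the discontinuity of $\be_h$ and the jumps in $[\bpsi]$, $[\xi]$ (which vanish since $\bpsi,\xi$ are smooth). Concretely, I would aim to reach an identity of the shape
\[
\|\be_h\|^2 = a(\be_h,\ \bQ_k\bpsi) - b(\bQ_k\bpsi,\ \epsilon_h) + b(\be_h,\ Q_{k-1}\xi) + s_2(\epsilon_h,\ Q_{k-1}\xi) + (\text{consistency terms}),
\]
using the same manipulations that produced the error equations \eqref{ee1}--\eqref{ee2}, now with the roles of exact and test functions reversed.

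Next I would invoke the error equations \eqref{ee1} and \eqref{ee2} with the specific test functions $\bv = \bQ_k\bpsi \in V_h$ and $q = Q_{k-1}\xi \in W_h$. This replaces $a(\be_h, \bQ_k\bpsi) - b(\bQ_k\bpsi, \epsilon_h)$ by $-E_1(\bu,\bQ_k\bpsi) - E_2(\bu,\bQ_k\bpsi) + E_3(p,\bQ_k\bpsi) + s_1(\bQ_k\bu,\bQ_k\bpsi)$ and replaces $b(\be_h, Q_{k-1}\xi) + s_2(\epsilon_h, Q_{k-1}\xi)$ by $E_4(\bu,Q_{k-1}\xi) + s_2(Q_{k-1}p, Q_{k-1}\xi)$. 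So $\|\be_h\|^2$ becomes a sum of $E_i$-type functionals and stabilization terms, each evaluated with one smooth argument ($\bpsi$ or $\xi$) in place of an arbitrary discrete function. The point is that the consistency functionals are small in \emph{both} arguments: Lemma~\ref{Lemma:myestimates} already gives $h^t$ in terms of $\|\bu\|_{t+1}$ or $\|p\|_t$ against the triple-bar / $|\cdot|_{0,h}$ norm of the second argument, and I would establish the companion bounds $\3bar \bQ_k\bpsi - \bpsi\3bar$-type estimates, i.e. that each functional tested against $\bQ_k\bpsi$ or $Q_{k-1}\xi$ carries an extra factor $h^s \|\bpsi\|_{1+s}$ or $h^s\|\xi\|_s$, by the same projection/trace estimates used for \eqref{p1} together with the interior-regularity $s$ of the dual solution. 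Combining, each term is bounded by $C h^{t+s}(\|\bu\|_{t+1}+\|p\|_t)(\|\bpsi\|_{1+s}+\|\xi\|_s)$; a few terms (those involving $\3bar\be_h\3bar$ or $|\epsilon_h|_{0,h}$ paired with a smooth factor) instead inherit $\3bar\be_h\3bar + |\epsilon_h|_{0,h} \le C h^t(\|\bu\|_{t+1}+\|p\|_t)$ from Theorem~\ref{h1-bd}, again with an $h^s$ from the dual side.

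Finally I would apply the regularity estimate \eqref{reg}, $\|\bpsi\|_{1+s} + \|\xi\|_s \le C\|\be_h\|$, to absorb the dual norms into the left-hand side, yielding $\|\be_h\|^2 \le C h^{t+s}(\|\bu\|_{t+1}+\|p\|_t)\|\be_h\|$ and hence \eqref{l2-error}. The main obstacle I anticipate is bookkeeping in the first step: getting the duality identity into a form that matches \emph{exactly} the left-hand sides of the error equations \eqref{ee1}--\eqref{ee2}, since the MWG weak curl/gradient are defined with averages $\{\cdot\}$ and the integration-by-parts shuffles (as in the proof of Lemma~\ref{lemma-ue} and the derivation of \eqref{m7}--\eqref{m8}) must be tracked carefully so that all interface terms either cancel, vanish by the smoothness of $(\bpsi,\xi)$, or land in a recognizable $E_i$ or $s_j$ term. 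A secondary technical point is verifying that $\nabla\cdot\bpsi = 0$ and the continuity of $\bpsi\cdot\bn$ are genuinely used to kill the term that would otherwise pair $\be_h$ against $\nabla\cdot\bpsi$, exactly as in the uniqueness proof; once that term is gone, the rest is a routine application of Lemma~\ref{Lemma:myestimates}-style estimates with the extra $h^s$ supplied by the dual regularity.
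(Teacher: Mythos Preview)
Your proposal is correct and follows essentially the same Aubin--Nitsche duality route as the paper: test the dual problem against $\be_h$, rewrite via the manipulations \eqref{m7}--\eqref{m8}, add and subtract $b(\bQ_k\bpsi,\epsilon_h)$, invoke the error equations \eqref{ee1}--\eqref{ee2} with $\bv=\bQ_k\bpsi$ and $q=Q_{k-1}\xi$, and bound the resulting dozen consistency/stabilization terms by combining Lemma~\ref{Lemma:myestimates}, Theorem~\ref{h1-bd}, and the regularity \eqref{reg}. The only slip is that $\nabla\cdot\bpsi=0$ enters when showing $b(\bQ_k\bpsi,\epsilon_h)=E_4(\bpsi,\epsilon_h)$, i.e.\ it kills a term paired with $\epsilon_h$ rather than $\be_h$, but your instinct that this is precisely where the divergence constraint is needed is right.
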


\begin{proof}
Testing the first equation of (\ref{dual-problem}) by $\be_h$ gives
\[
\|\bQ_k\bu-\bu_h\|^2=(\curl(\nu\curl\bpsi),\
\be_h)-(\nabla\xi,\ \be_h).
\]
Using (\ref{m7}) and (\ref{m8}) (with $\bpsi, \xi, \be_h$ in the
place of $\bu, p, \bv$, respectively), the above equation becomes
\a{ 
\|\bQ_k\bu-\bu_h\|^2  &= (\nu\cw\bQ_k\bpsi,\ \cw\be_h)-(\be_h,\ \nabla_w(Q_h\xi))\\
  & \quad \ +E_2(\bpsi, \be_h)+E_1(\bpsi, \be_h)-E_3(\xi,\be_h)\\&= a(\bQ_k \bpsi,\ \be_h)-b(\be_h,\Q_h\xi)-s_1(\bQ_k\bpsi,\ \be_h)\\
   &\quad \ +E_2(\bpsi, \be_h)+E_1(\bpsi, \be_h)-E_3(\xi,\be_h). }
The error equation (\ref{ee2}) implies
\[
b(\be_h,\ Q_{k-1}\xi)=-s_2(\epsilon_h,\ Q_{k-1}\xi)+E_4(\bu, Q_{k-1}\xi)+s_2(Q_{k-1}p,Q_{k-1}\xi).
\]
Combining the two equations above yields
\an{\ad{
\|\bQ_k\bu-\bu_h\|^2&= a(\bQ_k \bpsi,\ \be_h)-s_2(\epsilon_h,\ Q_{k-1}\xi)
     +E_4(\bu, Q_{k-1}\xi)\\
   &\quad \ +s_2(Q_{k-1}p,Q_k\xi) 
  - s_1(\bQ_k\bpsi,\ \be_h)\\
   &\quad\ +E_2(\bpsi, \be_h)+E_1(\bpsi, \be_h)+E_3(\xi,\be_h).}\label{d0} }
It now follows from the definition of $\bQ_k$, \eqref{d-g}, the usual integration by parts, and the
second equation of (\ref{dual-problem}) that
\begin{eqnarray*}
& & b(\bQ_k\bpsi,\ \epsilon_h)\\
&=&(\bQ_k\bpsi, \nabla_w\epsilon_h)_{\T_h}=\l \{\epsilon_h\},\ \bQ_k\bpsi\cdot\bn\r_{\partial\T_h}-(\epsilon_h,\ \nabla\cdot(\bQ_k\bpsi))_{\T_h}\\
&=&\l \{\epsilon_h\}-\epsilon_h,\ \bQ_k\bpsi\cdot\bn\r_{\partial\T_h}+(\nabla\epsilon_h,\ \bpsi)_{\T_h}\\
&=&-\l [\epsilon_h],\ \bQ_k\bpsi\cdot\bn\r_{\partial\T_h}+\l [\epsilon_h],\ \bpsi\cdot\bn\r_{\partial\T_h}\\
&=&E_4(\bpsi,\epsilon_h),
\end{eqnarray*}
where we have used the fact that $\sum_{T\in\T_h}\l \{\epsilon_h\},\ \bpsi\cdot\bn\r_\pT=0$.
Adding and subtracting $b(\bQ_k\bpsi,\ \epsilon_h)$ to the equation (\ref{d0}) and using the equation above and (\ref{ee1}), we have
\an{\ad{ &\quad\
\|\bQ_k\bu-\bu_h\|^2\\
&= a(\bQ_k\bpsi,\ \be_h)-b(\bQ_k\bpsi,\ \epsilon_h)+E_4(\bpsi,\epsilon_h)\\
&\quad\ - s_2(\epsilon_h,\ Q_{k-1}\xi)+E_4(\bu, Q_{k-1}\xi)+s_2(Q_{k-1}p,Q_{k-1}\xi) \\
&\quad\ - s_1(\bQ_k\bpsi,\ \be_h)+E_2(\bpsi, \be_h)+E_1(\bpsi, \be_h)+E_3(\xi,\be_h) \\
&= -E_1(\bu,\bQ_k\bpsi)-E_2(\bu,\bQ_k\bpsi)+E_3(p,\bQ_k\bpsi) \\
 & \quad\ +s_1(\bQ_k\bu,\bQ_k\bpsi) +E_4(\bpsi,\epsilon_h) 
 - s_2(\epsilon_h,\ Q_{k-1}\xi)\\
&\quad\ +E_4(\bu, Q_{k-1}\xi)+s_2(Q_{k-1}p,Q_{k-1}\xi) - s_1(\bQ_k\bpsi,\ \be_h)\\
&\quad \ +E_2(\bpsi, \be_h)+E_1(\bpsi, \be_h)+E_3(\xi,\be_h).}\label{d1} }
We will provide the estimates for the twelve terms on the last line of (\ref{d1}). Among them, we can bound six of them by using Lemma \ref{Lemma:myestimates} with $t=s$ and (\ref{err1}),
\begin{eqnarray*}
|s_1(\bQ_k\bpsi,\ \be_h)| &\le& Ch^s\|\bpsi\|_{1+s} \3bar\be_h\3bar\le Ch^{t+s}(\|\bu\|_{t+1}+\|p\|_t) \|\bpsi\|_{1+s},\\
|s_2(\epsilon_h,\ Q_h \xi)|&\le& Ch^s\|\xi\|_{s}\ |\epsilon_h|_{0,h}\le Ch^{t+s}(\|\bu\|_{t+1}+\|p\|_t) \|\xi\|_{s},\\
|E_1(\bpsi,\bv)|&\le& Ch^s\|\bpsi\|_{1+s} \3bar\be_h\3bar\le Ch^{t+s}(\|\bu\|_{t+1}+\|p\|_t) \|\bpsi\|_{1+s},\\
|E_2(\bpsi,\be_h)|&\le& Ch^s\|\bpsi\|_{1+s} \3bar\be_h\3bar\le Ch^{t+s}(\|\bu\|_{t+1}+\|p\|_t) \|\bpsi\|_{1+s},\\
|E_3(\xi,\be_h)|&\le& C h^s\|\xi\|_{s}\3bar\be_h\3bar\le Ch^{t+s}(\|\bu\|_{t+1}+\|p\|_t) \|\xi\|_{s}, \\
|E_4(\bpsi,\epsilon_h)|&\le& Ch^s\|\bpsi\|_{1+s}|\epsilon_h|_{0,h}\le Ch^{t+s}(\|\bu\|_{t+1}+\|p\|_t) \|\bpsi\|_{1+s}.
\end{eqnarray*}
Using the trace inequality (\ref{trace}), Cauchy-Schwarz inequality, and the properties of the projection operators $\bQ_k$ and $Q_{k-1}$, we have
\begin{eqnarray*}
|s_1(\bQ_k\bu,\ \bQ_k\bpsi)|&=&|\sum_{T\in\T_h} h_T^{-1}(\langle [\bQ_k\bu]\times\bn,\; [\bQ_k\bpsi]\times\bn\rangle_\pT\\
  & & +\langle [\bQ_k\bu]\cdot\bn,\; [\bQ_k\bpsi]\cdot\bn\rangle_{\pT\setminus\partial\Omega})|\\
&=&|\sum_{T\in\T_h} h_T^{-1}(\langle [\bQ_k\bu-\bu]\times\bn,\; [\bQ_k\bpsi-\bpsi]\times\bn\rangle_\pT \\
& & +\langle [\bQ_k\bu-\bu]\cdot\bn,\; [\bQ_k\bpsi-\bpsi]\cdot\bn\rangle_{\pT\setminus\partial\Omega})|\\
&\le& Ch^{t+s}\|\bu\|_{t+1} \|\bpsi\|_{1+s},
\end{eqnarray*}
and
\begin{eqnarray*}
|s_2(Q_{k-1} p,\ Q_{k-1}\xi)|&=&|\sum_{T\in\T_h} h_T\langle [Q_{k-1} p],\; [Q_{k-1}\xi]\rangle_\pT|\\
&\le &|\sum_{T\in\T_h}h_T |\langle [Q_{k-1}p- p],\; [Q_{k-1}\xi-\xi]\rangle_\pT|\\
&\le& Ch^{t+s}\|p\|_{t} \|\xi\|_{s}.
\end{eqnarray*}
Similarly, we have  that
\begin{eqnarray*}
|E_1(\bu,\ \bQ_k\bpsi)|&=& |\l\nu\curl\bu-\bQ_k(\nu\curl\bu),\,[\bQ_k\bpsi]\times\bn\r_{\partial\T_h}|\\
&=& |\l\nu\curl\bu-\bQ_k(\nu\curl\bu),\,[\bQ_k\bpsi-\bpsi]\times\bn\r_{\partial\T_h}|\\
&\le& Ch^{t+s}\|\bu\|_{t+1} \|\bpsi\|_{1+s}.
\end{eqnarray*}
and
\begin{eqnarray*}
|E_3(p,\bQ_k\bpsi)|&=& |\l \{p-Q_{k-1}p\},\,[\bQ_k\bpsi]\cdot\bn\r_{\partial\T_h}|\\
&=& |\l \{p-Q_{k-1}p\},\,[\bQ_k\bpsi-\bpsi]\cdot\bn\r_{\partial\T_h}|\\
&\le& Ch^{t+s}\|p\|_{t} \|\bpsi\|_{1+s}.
\end{eqnarray*}
and
\begin{eqnarray*}
|E_4(\bu,Q_{k-1}\xi)|&=&|\l (\bu-\bQ_k\bu)\cdot\bn,\,[Q_{k-1}\xi]\r_{\partial\T_h}|\\
&=&|\l (\bu-\bQ_k\bu)\cdot\bn,\,[Q_{k-1}\xi-\xi]\r_{\partial\T_h}|\\
&\le& Ch^{t+s}\|\bu\|_{t+1} \|\xi\|_{s}.
\end{eqnarray*}
Using (\ref{e10}),  (\ref{p1}),      and the trace inequality (\ref{trace}), we have
\begin{eqnarray*}
|E_2(\bu,\ \bQ_k\bpsi)|&=&|(\bQ_k\curl\bu-\cw\bQ_k\bu,\ \cw\bQ_k\bpsi)|\\
&=& |\l \{\bu-\bQ_k\bu\}, [\cw\bQ_k\bpsi]\times\bn\r_{\partial\T_h}|\\
&=&| \l \{\bu-\bQ_k\bu\}, [\cw\bQ_k\bpsi-\curl\bpsi]\times\bn\r_{\partial\T_h}|\\
&\le& Ch^{t+s}\|\bu\|_{t+1} \|\bpsi\|_{1+s}.
\end{eqnarray*}
Substituting all the estimates above into (\ref{d1}) and using (\ref{reg})
imply the  error estimate (\ref{l2-error}). This completes
the proof of the theorem.
\end{proof}

\section{Numerical Results}\label{Section:NE}

We solve the model problem \eqref{moment1}--\eqref{bc1} on the unit cube domain $\Omega=(0,1)^3$,
   with $\mu=\epsilon=1$.
The exact solution is chosen as
\an{\label{s1} \b u=\p{ z^2\\ x^3\\ y^4 }, \quad p=x^4,  }
which defines the functions $\b f_1$, $g_1$, and non-homogeneous boundary conditions in
   \eqref{moment1}--\eqref{bc1}.
The computation is done on uniform cubic meshes shown in Figure \ref{grid3d}.
In Tables \ref{t1}--\ref{t2},  we list the errors and the computed order of convergence
  in various norms, by the  $P_k (k=1,2,3,4)$ weak Galerkin finite elements \cite{Mu-W-Y-Z} and
  by the new $P_k (k=1,2,3,4)$   modified weak Galerkin finite elements.
 The optimal-order convergence is achieved in all cases.  
For the $P_1$ finite element methods,  the number of unknowns in the modified weak Galerkin
  finite element discrete equations is about one-third of that of the weak Galerkin equations.
But both methods produce solutions of about equal accuracy, in all the tests.

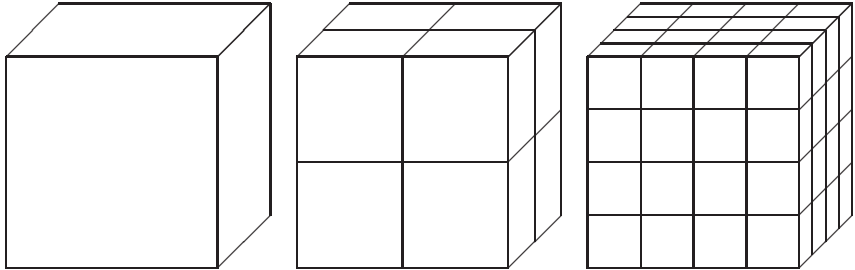
\begin{figure}[ht]
\begin{center}
 \setlength\unitlength{1pt}
    \begin{picture}(320,118)(0,3)
    \put(0,0){\begin{picture}(110,110)(0,0)
       \multiput(0,0)(80,0){2}{\line(0,1){80}}  \multiput(0,0)(0,80){2}{\line(1,0){80}}
       \multiput(0,80)(80,0){2}{\line(1,1){20}} \multiput(0,80)(20,20){2}{\line(1,0){80}}
       \multiput(80,0)(0,80){2}{\line(1,1){20}}  \multiput(80,0)(20,20){2}{\line(0,1){80}}
      \end{picture}}
    \put(110,0){\begin{picture}(110,110)(0,0)
       \multiput(0,0)(40,0){3}{\line(0,1){80}}  \multiput(0,0)(0,40){3}{\line(1,0){80}}
       \multiput(0,80)(40,0){3}{\line(1,1){20}} \multiput(0,80)(10,10){3}{\line(1,0){80}}
       \multiput(80,0)(0,40){3}{\line(1,1){20}}  \multiput(80,0)(10,10){3}{\line(0,1){80}}
      \end{picture}}
    \put(220,0){\begin{picture}(110,110)(0,0)
       \multiput(0,0)(20,0){5}{\line(0,1){80}}  \multiput(0,0)(0,20){5}{\line(1,0){80}}
       \multiput(0,80)(20,0){5}{\line(1,1){20}} \multiput(0,80)(5,5){5}{\line(1,0){80}}
       \multiput(80,0)(0,20){5}{\line(1,1){20}}  \multiput(80,0)(5,5){5}{\line(0,1){80}}
      \end{picture}}

    \end{picture}
    \end{center}
\caption{ The first three grids for the computation in Tables \ref{t1}--\ref{t4}.  }
\label{grid3d}
\end{figure}

\begin{table}[ht]
  \centering  \renewcommand{\arraystretch}{1.2}
  \caption{Error profile on the meshes shown as in Figure \ref{grid3d}
    for the solution \eqref{s1}, by the $P_1$ elements. }
  \label{t1}
\begin{tabular}{c|cc|cc|cc|r}
\hline
grid & \multicolumn{2}{c|}{ $\|Q_k \bm{u}-\bm{u}_h\| $    $O(h^r)$}
   & \multicolumn{2}{c|}{  $\3bar{Q_k \bm{u}-\bm{u}_h}\3bar$  $O(h^r)$}
    &  \multicolumn{2}{c|}{ $ \| p-p_h\| $  $O(h^r)$}& $\dim$  
  \\ \hline
    &  \multicolumn{7}{c}{ By the  $P_1$ weak Galerkin finite element method \cite{Mu-W-Y-Z}. }   \\
\hline 
 1&   0.124E+01& 0.0&   0.267E+01& 0.0&   0.114E+00& 0.0&     31 \\
 2&   0.372E+00& 1.7&   0.156E+01& 0.8&   0.122E+00& 0.0&    296 \\
 3&   0.107E+00& 1.8&   0.889E+00& 0.8&   0.817E-01& 0.6&   2560 \\
 4&   0.315E-01& 1.8&   0.487E+00& 0.9&   0.423E-01& 0.9&  21248 \\ 
 5&   0.879E-02& 1.9&   0.253E+00& 0.9&   0.203E-01& 1.1& 173056 \\
\hline
    &  \multicolumn{7}{c}{ By the  $P_1$ modified weak Galerkin finite element method. }   \\
\hline
 1&   0.124E+01& 0.0&   0.267E+01& 0.0&   0.114E+00& 0.0&     31 \\
 2&   0.294E+00& 2.1&   0.134E+01& 1.0&   0.120E+00& 0.0&    176 \\
 3&   0.757E-01& 2.0&   0.598E+00& 1.2&   0.817E-01& 0.6&   1120 \\
 4&   0.166E-01& 2.2&   0.236E+00& 1.3&   0.452E-01& 0.9&   7808 \\ 
 5&   0.387E-02& 2.1&   0.933E-01& 1.3&   0.219E-01& 1.0&  57856 \\
\hline
    \end{tabular}%
\end{table}%

\begin{table}[ht]
  \centering  \renewcommand{\arraystretch}{1.2}
  \caption{Error profile on the meshes shown as in Figure \ref{grid3d}
    for the solution \eqref{s1}, by the $P_2$ elements. }
  \label{t2}
\begin{tabular}{c|cc|cc|cc|r}
\hline
grid & \multicolumn{2}{c|}{ $\|Q_k \bm{u}-\bm{u}_h\| $    $O(h^r)$}
   & \multicolumn{2}{c|}{  $\3bar{Q_k \bm{u}-\bm{u}_h}\3bar$  $O(h^r)$}
    &  \multicolumn{2}{c|}{ $ \| p-p_h\| $  $O(h^r)$}& $\dim$  
  \\ \hline
    &  \multicolumn{7}{c}{ By the  $P_2$ weak Galerkin finite element method \cite{Mu-W-Y-Z}. }   \\
\hline 
 1&   0.475E+00& 0.0&   0.131E+01& 0.0&   0.160E+00& 0.0&     70 \\
 2&   0.738E-01& 2.7&   0.372E+00& 1.8&   0.783E-01& 1.0&    668 \\
 3&   0.105E-01& 2.8&   0.980E-01& 1.9&   0.241E-01& 1.7&   5776 \\
\hline
    &  \multicolumn{7}{c}{ By the  $P_2$ modified weak Galerkin finite element method. }   \\
\hline
 1&   0.475E+00& 0.0&   0.131E+01& 0.0&   0.160E+00& 0.0&     70 \\
 2&   0.695E-01& 2.8&   0.359E+00& 1.9&   0.798E-01& 1.0&    416 \\
 3&   0.127E-01& 2.4&   0.106E+00& 1.8&   0.243E-01& 1.7&   2752 \\
\hline
    \end{tabular}%
\end{table}%

\begin{table}[ht]
  \centering  \renewcommand{\arraystretch}{1.2}
  \caption{Error profile on the meshes shown as in Figure \ref{grid3d}
    for the solution \eqref{s1}, by the $P_3$ elements. }
  \label{t3}
\begin{tabular}{c|cc|cc|cc|r}
\hline
grid & \multicolumn{2}{c|}{ $\|Q_k \bm{u}-\bm{u}_h\| $    $O(h^r)$}
   & \multicolumn{2}{c|}{  $\3bar{Q_k \bm{u}-\bm{u}_h}\3bar$  $O(h^r)$}
    &  \multicolumn{2}{c|}{ $ \| p-p_h\| $  $O(h^r)$}& $\dim$  
  \\ \hline
    &  \multicolumn{7}{c}{ By the  $P_3$ weak Galerkin finite element method \cite{Mu-W-Y-Z}. }   \\
\hline 
 1&   0.138E+00& 0.0&   0.344E+00& 0.0&   0.114E+00& 0.0&    130 \\
 2&   0.101E-01& 3.8&   0.432E-01& 3.0&   0.193E-01& 2.6&   1232 \\
 3&   0.674E-03& 3.9&   0.538E-02& 3.0&   0.270E-02& 2.8&  10624 \\
\hline
    &  \multicolumn{7}{c}{ By the  $P_3$ modified weak Galerkin finite element method. }   \\
\hline
 1&   0.138E+00& 0.0&   0.344E+00& 0.0&   0.114E+00& 0.0&    130 \\
 2&   0.930E-02& 3.9&   0.340E-01& 3.3&   0.195E-01& 2.5&    800 \\
 3&   0.554E-03& 4.1&   0.301E-02& 3.5&   0.273E-02& 2.8&   5440 \\
\hline
    \end{tabular}%
\end{table}%

\begin{table}[ht]
  \centering  \renewcommand{\arraystretch}{1.2}
  \caption{Error profile on the meshes shown as in Figure \ref{grid3d}
    for the solution \eqref{s1}, by the $P_4$ elements. }
  \label{t4}
\begin{tabular}{c|cc|cc|cc|r}
\hline
grid & \multicolumn{2}{c|}{ $\|Q_k \bm{u}-\bm{u}_h\| $    $O(h^r)$}
   & \multicolumn{2}{c|}{  $\3bar{Q_k \bm{u}-\bm{u}_h}\3bar$  $O(h^r)$}
    &  \multicolumn{2}{c|}{ $ \| p-p_h\| $  $O(h^r)$}& $\dim$  
  \\ \hline
    &  \multicolumn{7}{c}{ By the  $P_4$ weak Galerkin finite element method \cite{Mu-W-Y-Z}. }   \\
\hline 
 1&   0.317E-01& 0.0&   0.652E-01& 0.0&   0.319E-01& 0.0&    215 \\
 2&   0.998E-03& 5.0&   0.367E-02& 4.1&   0.222E-02& 3.8&   2020 \\
 3&   0.330E-04& 4.9&   0.209E-03& 4.1&   0.144E-03& 3.9&  17360 \\
\hline
    &  \multicolumn{7}{c}{ By the  $P_4$ modified weak Galerkin finite element method. }   \\
\hline
 1&   0.317E-01& 0.0&   0.652E-01& 0.0&   0.319E-01& 0.0&    215 \\
 2&   0.892E-03& 5.2&   0.286E-02& 4.5&   0.227E-02& 3.8&   1360 \\
 3&   0.295E-04& 4.9&   0.165E-03& 4.1&   0.147E-03& 3.9&   9440 \\
\hline
    \end{tabular}%
\end{table}%

\end{document}